\definecolor{darkred}{rgb}{0.75,0,0.3}
\newcommand\AND{\quad\text{and}\quad}
\newcommand\C{\mathbb C}
\newcommand\cuplus{\,\hbox{$\cup^{^{\hspace*{-6.75pt}\scriptscriptstyle+}}$}\,}
\newcommand\diag{\mathsf{diag}}
\newcommand\dps{\displaystyle}
\newcommand\Lp{\mathfrak{L}}
\newcommand\Prob{\mathbb{P}}
\newcommand\R{\mathbb R}
\newcommand\res{\hspace*{1pt}\rule[-5pt]{.5pt}{10pt}\hspace*{1pt}}
\newcommand\ret{\hspace*{-2pt}\rule[-2.5pt]{0pt}{10pt}}
\newcommand\taub{\boldsymbol{\tau}}
\newcommand\uno{\mathbf{1}}
\newcommand\wh{\widehat}
\newcommand\wt{\widetilde}
\newcommand\Z{\mathbb Z}
\newcommand\zero{\mathbf{0}}
\numberwithin{equation}{section}
\newtheoremstyle{mythm}
  {9pt}
  {9pt}
  {\itshape}
  {0pt}
  {\bfseries}
  {}
  { }
  {\thmnumber{(#2)}\thmname{ #1}\thmnote{ #3}}
\newtheoremstyle{mydef}
  {9pt}
  {9pt}
  {\normalfont}
  {0pt}
  {\bfseries}
  {}
  { }
  {\thmnumber{(#2)}\thmname{ #1}\thmnote{ #3}}
\theoremstyle{mythm}
\newtheorem{thm}[equation]{Theorem.}
\newtheorem{pro}[equation]{Proposition.}
\newtheorem{lem}[equation]{Lemma.}
\newtheorem{cor}[equation]{Corollary.}
\theoremstyle{mydef}
\newtheorem{dfn}[equation]{Definition.}
\newtheorem{exa}[equation]{Example.}
\newtheorem{rmk}[equation]{Remark.}
\newtheorem{rmkdef}[equation]{Definition \& Remark.}
\newtheorem{dis}[equation]{Discussion.}
\newtheorem{imp}[equation]{}
\begin{document}$\,$ \vspace{-1truecm}
\title{Laplace and bi-Laplace equations for directed networks and Markov chains}
\author{\bf Thomas Hirschler, Wolfgang Woess}
\address{\parbox{.8\linewidth}{Institut f\"ur Diskrete Mathematik,\\ 
Technische Universit\"at Graz,\\
Steyrergasse 30, A-8010 Graz, Austria\\}}
\email{thirschler@tugraz.at, woess@tugraz.at}
\date{March 31, 2021} 
\begin{abstract}
The networks of this -- primarily (but not exclusively) expository -- compendium are strongly connected, finite directed
graphs $X$, where each oriented edge $(x,y)$ is equipped with a positive
weight (conductance) $a(x,y)$. We are not assuming symmetry of this
function, and in general we do not require that along with $(x,y)$,   
also $(y,x)$ is an edge. The weights give rise to a difference
operator, the normalised version of which we consider as our Laplace
operator. It is associated with a Markov chain with state space $X$. 
A non-empty subset of $X$ is designated as the boundary.
We provide a systematic exposition of the different types of Laplace
equations, starting with the Poisson equation, Dirichlet problem and Neumann problem.
For the latter, we discuss the definition of outer normal derivatives.
We then pass to Laplace equations involving potentials, thereby also addressing
the Robin boundary problem. 
Next, we study the bi-Laplacian and associated equations: the iterated Poisson
equation, the bi-Laplace Neumann and Dirichlet problems, and the ``plate equation''.
It turns out that the bi-Laplace Dirichlet to Neumann map is of non-trivial
interest.
The exposition concludes with two detailed examples.
\end{abstract}
\subjclass[2020] {
                  31C20; 
                  35R02, 
                  60J10 
                  }
                  \keywords{Directed network, discrete Laplacian, bi-Laplacian, boundary value problems}
                  \thanks{Partially supported by Austrian Science Fund FWF: P31889-N35}
\maketitle

\markboth{{\sf T. Hirschler and W. Woess}}
{{\sf Laplace equations}}
\baselineskip 15pt

\section{Introduction}\label{sec:intro}


A finite \emph{directed graph} is a finite set $X$ together with
a set $E \subset X^2$ of directed edges. Thus, we exclude multiple edges.
Loops, that is, edges of  the form $(x,x)$, play no role in our 
considerations and are excluded. We assume that $X$ is \emph{strongly
connected:} for any pair of points $x,y \in X$, there is a \emph{directed
path} from $x$ to $y$. By definition, it consists of vertices
$x=x_0\,, x_1\,,\dots, x_n=y$ such that $(x_{i-1}\,,x_i) \in E$ for 
$i = 1, \dots, n$, and $n$ is the \emph{length} of that path.

We now equip each directed edge with a \emph{weight} or \emph{conductance} $a(x,y) > 0$,
and speak of the resulting weighted graph as a \emph{directed network}. When $(x,y) \notin E$,
we set $a(x,y)=0$. We study the difference operator acting on functions
$u:X \to \C$ by
\begin{equation}\label{eq:Lap-a}
\Lp_a u(x) = \sum_y a(x,y)\bigl(u(y)-u(x)\bigr).
\end{equation}
Since nothing of what we are going to consider in this exposition depends substantially
on normalisation, we rather pass to 
\begin{equation}\label{eq:p}
p(x,y) = a(x,y)/m(x)\,,\quad \text{where}\quad m(x) = \sum_y a(x,y) 
\end{equation}
and the associated stochastic transition matrix $P= \bigl( p(x,y) \bigr)_{x,y \in X}$.
Then our normalised \emph{Laplace operator} is 
$$
\Delta = P - I\,,\quad \text{where} \quad Pu(x)= \sum_y p(x,y) u(y),
$$ 
and $I$ is of course the identity matrix over $X$, so that $Iu(x)=u(x)$.

We shall also designate a \emph{boundary} $\partial X$ of $X$. This is defined as
an arbitrary non-empty subset $\partial X \subset X$. It may be natural to require that for every 
$y \in \partial X$ there is $x\in X^o = X \setminus \partial X$ (the \emph{interior} of $X$)
such that $(x,y) \in E$, but mostly we will not need this. 
In the following, we let $L(X)$ be the linear space of
all functions $f: X \to \C$, thought of as column vectors.

Then we are interested in the solutions of the following types of basic problems:
\begin{itemize}
\item \emph{Poisson equation:} find $u \in L(X)$ such that $\Delta u = f$, where $f \in L(X)$
is given.
\item \emph{Neumann problem:} find $u \in L(X)$ such that $\Delta u = f$ on $X^o$, 
and the outer normal derivative satisfies $
\partial_{\vec n}u= g$ on $\partial X$,
where $f \in L(X^o)$ and 
$g \in L(\partial X)$ are given.
\item  \emph{Dirichlet problem:} find $u \in L(X)$ such that $\Delta u = f$ on $X^o$, 
and $u = g$ on $\partial X$,
where $f \in L(X^o)$ and $g \in L(\partial X)$ are given.
\item \emph{Mixed boundary problem:} decompose $\partial X = D \cuplus N$. Find 
$u \in L(X)$ such that $\Delta u = f$ on $X^o$, 
and $u = g{\res}_{D}$ on $D$ as well as $\partial_{\vec n}u = g{\res}_{N}$ on $N$,
where $f \in L(X^o)$ and $g \in L(\partial X)$ are given.
\item \emph{Robin boundary problem:} Find 
$u \in L(X)$ such that $\Delta u = f$ on $X^o$, 
and\\ $\alpha\cdot u + \beta \cdot \partial_{\vec n}u = g$ on $\partial X$, 
where $f \in L(X^o)$ and $g \in L(\partial X)$, as well as $\alpha, \beta \in \C \setminus \{0\}$ are given.
\item \emph{Schr\"odinger type equations:} in all of the above problems, replace 
$\Delta$ by $\Delta u - v\cdot u$, where $v \in L(X)$ is a suitable potential.
\end{itemize}

The first four of these problems are dealt with in \S \ref{sec:global}.
Above, one of the important
questions is how to define the outer normal derivative at a boundary point. We discuss
this with some care, also regarding sub-networks and their boundaries in $X$. 

The problems involving potentials are then considered in \S \ref{sec:potentials}. This includes
the Robin problem.

Subsequently, in \S \ref{sec:bi},  
we pass to the bi-Laplacian $\Delta^2$ and associated problems: 
\begin{itemize}
 \item the \emph{iterated Poisson equation,} 
\item the \emph{bi-Laplace Neumann problem,} and 
\item the \emph{bi-Laplace Dirichlet problem}
\end{itemize}
are as above, with $\Delta^2$ in the place of $\Delta$. 

Here, an interesting phenomenon comes up: in passing to $\Delta^2$, one gains only
one degree of freedom, independently of the size of the boundary. 
The general \emph{plate equation} in its first variant is to find $u \in L(X)$ such that 
$$
\Delta^2 u = f \; \text{ on }\; X^o\,,\quad \partial_{\vec n}u = g_1\; \text{ on }\;\partial X
\AND u = g_2\; \text{ on }\;\partial X,
$$
where $f \in L(X^o)$ and $g_1\,, g_2 \in L(\partial X)$ are given. It turns out
that only under a specific condition on these three functions, there is a solution.
This leads to a non-trivial bi-Laplace Dirichlet to Neumann map. 

The restrictive condition for solvability of the above plate equation induces us to
propose a second variant, which involves the \emph{second interior} of $X$
as well as a variant of the outer normal derivative.
Using this second interior, one can also solve an iterated Dirichlet problem
which can be extended to higher powers of the Laplacian.

In the final section \S \ref{sec:ex}, we compute in detail two examples.

The answers to all the problems considered here are obtained by 
suitable matrix operations.
They also have probabilistic
interpretations in terms of the underlying Markov chain. The latter is given by a sequence
$(Z_n)_{n \ge 0}$ of $X$-valued random variables with $\Prob[Z_{n+1} = y \mid Z_n = x] = p(x,y)$
for $x,y \in X$, where of course the Markov assumption holds, namely, conditionally upon
the value of $Z_n\,$, the past $(Z_j)_{0  \le j \le n-1}$ and the future $(Z_k)_{k \ge n+1}$
are independent. The unique stationary distribution of this Markov chain plays an important
role. This is one of the reasons why we have chosen to normalise the Laplacian.
All results can be easily restated for the operator $\Lp_a$ instead of $\Delta$. 
(For example, an equation of the form $\Lp_a u = f$ transforms into $\Delta u = \tilde f$,
where $\tilde f(x) = f(x)/m(x)$.)

We mention that with some care one can elaborate
the issues considered here also in the case when the graph $X$ is countably infinite, as long
as a stationary \emph{probability} measure $\pi$ exists. In probabilistic terms, this means
that the underlying Markov chain is \emph{positive recurrent.}

\smallskip

Motivation for studying the above issues may come from the discretisation of 
continuous PDEs, but the present note is not written in a spirit of numerical analysis,
and algorithmic features are not considered. Classical work is due to {\sc Duffin}~\cite{Du}, 
whose finite graphs are subsets of the rectangular lattice $\Z^d$, considered as 
discretisations of Euclidean domains; see e.g. also the very recent work of 
{\sc Varopoulos}~\cite{V1}, \cite{V2}. Another type of motivation comes
from ``electrical network'' theory, which has a long history and a well-known 
interplay with Markov chain theory, see the beautiful little book 
by {\sc Doyle and Snell}~\cite{DS}.

\smallskip

Several of the initial problems presented above are ``folklore'', such as the Dirichlet problem 
(which, however, keeps being ``rediscovered'', sometimes by complicated methods).
Basic results such as the solution of the Poisson equation for the Laplacian $P-I$
(in more generality than for finite state spaces) are part of the 
literature on the potential theory of Markov chains from the 1960ies, see
{\sc Kemeny and Snell}~\cite{KS} -- one of the most significant 
sources close to the spirit of the present note -- and the monograph by {\sc Kemeny, Snell and Knapp} 
(in particular, Chapter 9). However, it seems that this has remained secluded
from the non-probabilistic world of analysis and smooth potential
theory. More recent references from the discrete side are, for example, the lecture notes by 
{\sc Anandam}~\cite{An}, and, among other concerning Schr\"odinger type equations 
in the reversible (= self-adjoint) case,  
{\sc Bendito, Carmona and Encinas}~\cite {BCE0}, \cite{BCE1}, and, in particular \cite{BCE2}, as well as the work of these authors with {\sc Gesto}~\cite{BCEG}. 
See also the references in those papers.

Related only in part, there is a large body of work on discrete boundary value problems arising
for one-dimensional difference equations. Among the many references, we indicate the monographs 
by {\sc Atkinson}~\cite{At} and {\sc Agarwal}~\cite{Ag}. There is also a large number 
of contributions to the spectral theory of (self-adjoint) discrete Laplacians on finite 
networks, which however is not in the focus of the present work. 
See e.g. the books by {\sc Cvetkovi\'c, Doob and Sachs}~\cite{CDS} and {\sc Chung}~\cite{Ch},
and the beautiful article by {\sc Colin de Verdi\`ere}~\cite{CdV}.
Furthermore, there are some interesting
studies of Laplacians in discrete geometry, see e.g.  {\sc Kenyon}~\cite{Ke} or 
{\sc Bobenko and Springborn}~\cite{BoSp}. (These are only glimpses at the respective parts 
of the literature.)

On the other hand, the study of the bi-Laplacian in the discrete setting
has received only little and partly quite recent consideration in the literature; see e.g.
{\sc Yamasaki}~\cite{Ya}, {\sc Vanderbei}~\cite{Va}, {\sc Cohen et al.}~\cite{CCGS}, 
{\sc Anandam}~\cite{An}, {\sc Picardello and Woess}~\cite{PW}, or {\sc Hirschler and Woess}~\cite{HiWo}.
In the ``smooth'' literature, there is an ample body of work on this subject, and
we point at the lecture notes by {\sc Gazzola,  Grunau and Sweers}~\cite{GGS}.
In this context, the present paper may provide some new insight concerning the finite, 
discrete setting.

At several points, we have included discussions of possible approaches, where the discrete 
analogue of the smooth setting might allow different interpretations.
We have made an effort to present a comprehensive and coherent exposition, a task
which \emph{a posteriori} may appear easier than it was \emph{a priori}.

\section{The solutions of the basic problems}\label{sec:global}

While we write $L(X)$ for the space of functions $X \to \C$, of which we
think as column vectors, we consider measures as row vectors and write $M(X)$
for the resulting space. Of course, for $\mu \in M(X)$ and $A \subset X$,
we have $\mu(A) = \sum_{x \in A} \mu(x)$. In this sense, also $p(x,A) = \sum_{x \in A} p(x,y)$.

Strong connectedness of the graph $X$ amounts to \emph{irreducibility} 
of the non-negative matrix $P$, namely: for all $x,y \in X$ there is 
$n = n_{x,y}$ such that $p^{(n)}(x,y) > 0$, where
$$
P^n = \bigl(p^{(n)}(x,y)\bigr)_{x,y\in X}\,,\quad P^0=I
$$
(matrix powers). The following is very well known -- see e.g. {\sc Seneta}~\cite{Seneta} or 
{\sc Woess}~\cite{WMarkov}. For the sake of completeness we provide an outline of the proof.

\begin{lem}\label{lem:stationary} Irreducibility of $P$ implies the following.\\[3pt]
\emph{(a)} Every \emph{harmonic function} is constant,  that is, $h \in L(X)$ satisfies
$\Delta h = 0$ if and only if $h$ is constant.\\[3pt]
\emph{(b)}
There is a unique positive \emph{stationary probability distribution} $\pi \in M(X,\R)$, that is,
\begin{equation}\label{eq:stationary}
\pi\, P = \pi\,,\quad  \pi(x) > 0 \; \text{ for all } x \in X\,,\AND \sum_{x \in X} \pi(x) = 1. 
\end{equation}
\end{lem}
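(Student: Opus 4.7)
The strategy splits naturally along the two parts. For (a) I would use a discrete maximum principle that exploits the stochastic row sums of $P$ together with irreducibility. Part (b) then drops out of (a) by rank/duality, with irreducibility used a second time to upgrade a merely nonzero left null vector to a strictly positive probability measure.

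For (a), first reduce to real $h$ by treating $\operatorname{Re} h$ and $\operatorname{Im} h$ separately, which is legitimate because $P$ is a real matrix, so harmonicity is preserved by each part. Set $M = \max_X h$ and $S = \{x \in X : h(x) = M\}$; this is non-empty by finiteness of $X$. For any $x_0 \in S$, the identity $h(x_0) = \sum_y p(x_0,y) h(y)$ together with $\sum_y p(x_0,y) = 1$ and $h(y) \le M$ forces $h(y) = M$ for every $y$ with $p(x_0,y) > 0$. Iterating this along a directed path and invoking irreducibility (every vertex is reachable from $x_0$), we obtain $S = X$, so $h$ is constant. The converse direction is immediate from $P\uno = \uno$.

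For (b), part (a) gives $\dim \ker (P - I) = 1$, and since a matrix and its transpose have equal rank, also $\dim \ker (P^\top - I) = 1$. Choose any nonzero row vector $\pi$ with $\pi P = \pi$; I must argue that $\pi$ can be taken positive and real. Writing $|\pi|$ for the entrywise modulus, the triangle inequality gives $|\pi|(y) \le \sum_x |\pi|(x)\,p(x,y)$, and summing over $y$ using $\sum_y p(x,y) = 1$ shows the totals coincide, hence equality holds termwise. This forces the complex phases of the $\pi(x)$ to agree on the support, so after multiplication by a unit scalar $\pi$ may be assumed non-negative real, still satisfying $\pi P = \pi$. Irreducibility then promotes this to strict positivity: for any $y$, pick $x_0$ with $\pi(x_0) > 0$ and $n$ with $p^{(n)}(x_0,y) > 0$, and iterate $\pi = \pi P^n$ to conclude $\pi(y) > 0$. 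Normalising so that $\pi(X) = 1$ produces the desired stationary probability distribution; uniqueness is the one-dimensionality of the left null space.

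The only genuinely non-routine step is the phase-stabilisation argument passing from a complex left eigenvector to a non-negative real one; this is where a Perron--Frobenius-type input is needed, as opposed to purely linear-algebraic manipulations. Everything else is the standard maximum principle plus irreducible propagation.
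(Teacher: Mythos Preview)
Your proof is correct. Part (a) is essentially the paper's argument (maximum in place of minimum principle, propagated along paths). Part (b), however, takes a genuinely different route. The paper obtains \emph{existence} of a stationary distribution by a compactness argument on the Ces\`aro averages $\frac{1}{n}\sum_{k=0}^{n-1}\mu P^k$, and then proves \emph{uniqueness} by introducing the time-reversed matrix $\wh P$ (with $\hat p(x,y) = \pi(y)p(y,x)/\pi(x)$) and observing that any stationary $\mu$ yields a $\wh P$-harmonic function $x \mapsto \mu(x)/\pi(x)$, which is constant by (a) applied to $\wh P$. You instead use rank--nullity to get $\dim\ker(P^\top - I) = 1$ directly from (a), and then a Perron--Frobenius step (passing to $|\pi|$) to extract a nonnegative representative. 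Your approach is more self-contained and purely linear-algebraic; the paper's approach is more probabilistic in flavour and has the side benefit of introducing $\wh P$, which is used again later (e.g.\ in the adjoint Green identity). One small simplification of your argument: once you have shown $|\pi| P = |\pi|$, you can simply take $|\pi|$ as your stationary measure and skip the phase-alignment discussion entirely.
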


\begin{proof} For (a), if $h$ is harmonic then so are its real and imaginary part. Hence,
we may assume that $h$ is real, and  the statement follows from the minimum principle: let $x \in X$ such that $h(x) = \min h$.
Then
$$
\sum_y p^{(n)}(x,y)\underbrace{\bigl(h(y) - h(x)\bigr)}_{\displaystyle \ge 0} = 0\,,
$$ 
whence $h(y) = h(x)$ for each $x$ with $p^{(n)}(x) > 0$, and this holds for any $n$.
\\[3pt]
For (b), a straightforward compactness argument shows that there are stationary distributions,
namely, the accumulation points of the sequence 
$$
\frac{1}{n} \sum_{k=0}^{n-1} \mu\,P^k\,,
$$
where $\mu \in M(X)$ is an arbitrary non-vanishing non-negative measure.
If $\pi$ is stationary, then irreducibility implies $\pi(x)>0$ for all $x$, 
and  we can consider the new transition matrix 
\begin{equation}\label{eq:Phat}
\wh P = \bigl( \hat p(x,y)\bigr)_{x,y \in X} \quad \text{with} \quad 
\hat p(x,y) = \frac{\pi(y) p(y,x)}{\pi(x)}\,.
\end{equation} 
It is again irreducible, and 
$\mu P=\mu$ if and only if $h(x) = \mu(x)/\pi(x)$ satisfies $\wh P h = h$.
Thus, $h$ is constant by (a).
\end{proof}

For any two non-empty subsets $A, B \subset X$, we write 
$P_{A,B} = \bigl(p(x,y)\bigr)_{x\in A, y \in B}$ 
for the restriction of $P$ to $A \times B$. In particular, we write $P_A = P_{A,A}\,$, 
as well as $f_A$ for the restriction $f{\res}_{A}$ of $f \in L(X)$ to $A$. 
The following is also well-known.

\begin{lem}\label{lem:restrict} If $A \subset X$ strictly, then $I_A - P_A$
 is invertible, and 
$$
G_A:= (I_A - P_A)^{-1} = \sum_{n=0}^{\infty} P_A^n\,,
$$
a convergent series.
\end{lem}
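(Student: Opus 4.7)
The plan is to first establish that $P_A^n \to 0$ as $n \to \infty$, and then read off both invertibility of $I_A - P_A$ and the series formula for $G_A$ from the telescoping identity $(I_A - P_A)\sum_{n=0}^{m}P_A^n = I_A - P_A^{m+1}$: letting $m \to \infty$ produces $I_A$ on the right, and symmetry of the argument shows that $\sum_{n=0}^\infty P_A^n$ is simultaneously a left and right inverse of $I_A - P_A$.

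The substance of the proof therefore lies in decay of the sub-iterates $P_A^n$. I would write $B = X \setminus A$ (non-empty by strict inclusion) and study the row sum $\sigma_n(x) := \sum_{y \in A}P_A^n(x,y)$, which probabilistically is the chance that the Markov chain started at $x$ has not left $A$ by time $n$; stochasticity of $P$ makes $\sigma_n(x)$ non-increasing in $n$. The key step, and the only place where one really has to work, is to produce a uniform escape bound: there exist $N \ge 1$ and $\alpha > 0$ such that $\sigma_N(x) \le 1 - \alpha$ for every $x \in A$.

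To obtain this bound I would invoke irreducibility of $P$: for each $x \in A$ there are $n_x \ge 1$ and $b \in B$ with $p^{(n_x)}(x,b) > 0$, whence $\sigma_{n_x}(x) < 1$. Finiteness of $A$ is essential here, as it allows me to set $N := \max_{x\in A} n_x$ and $\alpha := \min_{x\in A}\bigl(1 - \sigma_N(x)\bigr)$, both of which would be problematic for infinite $A$. Once this bound is in place, a straightforward induction using Chapman--Kolmogorov in the form $\sigma_{(k+1)N}(x) = \sum_z P_A^N(x,z)\sigma_{kN}(z)$ yields $\sigma_{kN}(x) \le (1-\alpha)^k$; monotonicity upgrades this to $\|P_A^n\|_\infty \to 0$ geometrically along all $n$, non-negativity forces entrywise convergence to $0$, and the Neumann series then converges absolutely as required.
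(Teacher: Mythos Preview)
Your proposal is correct and follows essentially the same route as the paper's proof: both use irreducibility to find, for each $x\in A$, an $n_x$ at which the restricted row sum drops below $1$, take $N=\max_x n_x$ by finiteness of $A$, and obtain a uniform contraction bound yielding geometric decay of $P_A^n$ and hence convergence of the Neumann series. Your write-up is in fact more detailed than the paper's outline, which omits the explicit Chapman--Kolmogorov induction and the telescoping step.
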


\noindent\textit{Proof [Outline].} For each $x \in A$, let $n_x$ be the smallest $n$
such that $p^{(n)}(x,w) > 0$ for some $w \in X \setminus A$.
Then 
$$
\sum_{y \in A} p_A^{(n_x)}(x,y) < 1\,.
$$
We deduce that 
$$
\sum_{y \in A} p_A^{(n)}(x,y) < 1 \quad \text{for every }\; n \ge n_x\,.
$$
Let $N = \max \{ n_x: x \in A\}$. Then there is $\delta \in (0\,,\,1)$ such that
$$
\sum_{y \in A} p_A^{(N)}(x,y) \le 1-\delta \quad \text{for every }\; x \in A\,.
$$
Consequently, 
$$
\sum_{n=0}^{\infty} p_A^{(n)}(x,y) < \infty \quad \text{for all }\; x,y \in A. \eqno\square
$$

\medskip

While a priori, $P_A$ and $G_A$ are matrices over $A$, 
it will often be useful to consider them as matrices over the entire set $X$ with entries
$0$ outside of $A \times A$.

\smallskip

A comment on the probabilistic interpretation: for $x, y \in A$, the matrix entry 
$p_A^{(n)}(x,y)$ is the probability that the Markov chain starting at $x$ visits $y$ at 
time $n$, without ever leaving $A$  before that. Thus, the $(x,y)$-entry of the
matrix $G_A$ is the expected number of visits in $y$ of the Markov chain starting
at $x$ before it leaves the set $A$. Indeed, it is straightforward via the Borel-Cantelli
Lemma that it must leave $A$ with probability $1$ at some time.  

\smallskip

We choose a ``root'' vertex $o \in X$ as a reference point.
Let us say that a function $u \in L(X)$ is \emph{grounded,} if $u(o)=0$.
The following -- which can be seen as a consequence of the 
simple matrix version of the Fredholm alternative -- is a basis of 
almost everything which we are considering in this note. Therefore,
in spite of its simplicity, we call it a ``theorem''.
The measure $\pi$ is the one of Lemma \ref{lem:stationary}. Here and on several
occasions further below, we write weighted sums with respect to measures
on $X$, $X^o$ or $\partial X$ in the form of integrals in order to stress
the analogies with the continuous setting. Of course, we always remain in
the realm of finite-dimensional linear algebra.

\begin{thm}[Solution of the Poisson equation.] \label{thm:Poisson}
For $f \in L(X)$, the equation $\Delta u = f$ has a solution $u$ if and only if the \emph{charge}
$f$ is \emph{balanced,} that is,
\begin{equation}\label{eq:orthogonal}
\int_X f\, d\pi := \sum_{x \in X} \pi(x)\,f(x) = 0\,. 
\end{equation}
In this case, the unique grounded solution is given by
$$
u
= - G\ret_{X\setminus \{o\}}f\,,
$$
that is, 
$$
u(o)=0 \AND u(x) = -\sum_{y \in X\setminus \{o\}}\sum_{n=0}^{\infty} 
p_{X\setminus \{o\}}^{(n)}(x,y)f(y)\quad \text{for }\; x \in X\setminus \{o\}\,.
$$
All solutions are given by $u(\cdot) + c$, where $c \in \C$.
\end{thm}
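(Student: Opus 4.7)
The plan is to view this as a finite-dimensional Fredholm alternative. By Lemma \ref{lem:stationary}(a), the kernel of $\Delta : L(X) \to L(X)$ is the line of constants. Dually, stationarity $\pi P = \pi$ reads $\pi \Delta = 0$, so the range of $\Delta$ lies in the codimension-one subspace $\{f \in L(X) : \int_X f\, d\pi = 0\}$. A dimension count already forces equality, settling both necessity and abstract existence; the remaining task is to pin down the explicit grounded solution.

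Necessity is immediate: $\Delta u = f$ gives $\int_X f\,d\pi = \pi \Delta u = 0$. For uniqueness modulo constants, two solutions differ by a harmonic function, which is constant by Lemma \ref{lem:stationary}(a); the normalisation $u(o)=0$ then singles out a unique $u$.

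For the explicit formula, set $A = X \setminus \{o\}$, a strict subset, so that $G_A = (I_A - P_A)^{-1} = \sum_{n\ge 0} P_A^n$ exists by Lemma \ref{lem:restrict}. Define $u(o)=0$ and $u_A = -G_A f_A$. For $x \in A$, the term $p(x,o)u(o)$ vanishes, and
$$
\Delta u(x) = P_A u_A(x) - u_A(x) = -(P_A - I_A) G_A f_A(x) = f_A(x),
$$
so $\Delta u = f$ on $A$ with no use of the balance condition. Hence the balance condition must be precisely what is needed for the single remaining identity $\Delta u(o) = f(o)$.

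That one identity is where the main work lies. I would restrict the row-vector equation $\pi P = \pi$ to coordinates in $A$, getting
$$
\pi(o)\, p(o,\cdot)_A + \pi_A P_A = \pi_A,
\qquad\text{hence}\qquad
\pi(o)\, p(o,\cdot)_A\, G_A = \pi_A.
$$
Multiplying $u_A = -G_A f_A$ on the left by the row $p(o,\cdot)_A$ and applying this identity, together with $\pi(o) f(o) = -\pi_A f_A$ coming from $\int_X f\, d\pi = 0$, yields
$$
\sum_{y \in A} p(o,y)u(y) = -p(o,\cdot)_A G_A f_A = -\tfrac{1}{\pi(o)}\pi_A f_A = f(o),
$$
which is $\Delta u(o) = f(o)$ (again using $u(o)=0$). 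The hard point is exactly this scalar check at $o$: after inverting $I_A - P_A$ a single linear constraint on $f$ survives, and it must match $f(o)$ for every $\pi$-balanced $f$; the stationarity of $\pi$ supplies precisely this link, and is in fact the dual of the fact that we had to pass from $X$ to the strict subset $A$. The series representation of $G_A$ from Lemma \ref{lem:restrict} then yields the displayed summation formula, and the probabilistic reading (expected number of visits to $y$ before the chain started at $x$ reaches $o$) follows at once.
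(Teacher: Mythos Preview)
Your proof is correct. The overall strategy---Fredholm alternative via Lemma \ref{lem:stationary}(a) and a dimension count---matches the paper's, and your necessity and uniqueness arguments are essentially identical.

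Where you diverge is in deriving the explicit formula. The paper first invokes abstract existence of a grounded solution $u$, then observes that on $A = X\setminus\{o\}$ the equation $\Delta u = f$ with $u(o)=0$ reads $(P_A - I_A)u_A = f_A$, and simply applies $G_A$ to both sides. The check at $o$ never arises, because $u$ is already known to solve $\Delta u = f$ everywhere. You instead \emph{define} $u$ by the formula and verify $\Delta u = f$ at every point, which forces you to do the scalar computation at $o$ using $\pi(o)\,p(o,\cdot)_A G_A = \pi_A$. Your route is a bit more work but is self-contained (it does not rely on the dimension count for the formula) and makes transparent exactly where the balance condition is consumed---namely at the single vertex $o$. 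The paper's route is shorter but uses abstract existence as a black box before extracting the formula.
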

 
\begin{proof} If $f$ is such that a solution $u$ exists, then (since $\pi P = \pi$)
$$
\int_X f\,d\pi = \pi (P-I)u = 
\int_X u\,d(\pi P) - \int_X u \,d\pi = 0\,,
$$ 
so that \eqref{eq:orthogonal} holds.

By Lemma \ref{lem:stationary}(a), the kernel of $\Delta$ consists of the constant
functions, whence it is one-dimensional. Thus, the dimension of the range of $\Delta$
is $|X|-1$. It is contained in the hyperplane $\{ f \in L(X) : \int_X f\,d\pi
= 0\}$,
which itself has dimension $|X|-1$. Thus, the range must be that hyperplane.
This proves the first part.

Now let $f$ satisfy \eqref{eq:orthogonal}. By Lemma \ref{lem:stationary}, any two solutions
of the associated Kirchhoff problem differ by a constant, so that there is a unique
grounded solution $u$. If $x \in X \setminus \{o\}$ then, since $u(o)=0$,
$$
f(x) = \Delta u(x) = \left(\sum_y p(x,y)u(y)\right)-u(x) 
= \left(\sum_{y \ne o}  p(x,y)u(y)\right)-u(x)\,,
$$
that is,
$$
f\ret_{X\setminus \{o\}} = \bigl(P\ret_{X\setminus \{o\}} - I\ret_{X\setminus \{o\}}\bigr)
u\ret_{\,X\setminus \{o\}}.
$$ 

Applying $G_{X\setminus \{o\}}$ to both sides from the left, the result follows via
Lemma \ref{lem:restrict}.
\end{proof}

We can equip $L(X)$ with the inner product $(f,g)_{\pi} = 
\int_X f\, \bar g\,d\pi$, so that
it becomes a finite-dimensional real Hilbert space, denoted $L(X,\pi)$. 
 
\begin{lem}\label{lem:adjoint} For all $f, g \in L(X)$,
$$
\int_X \bigl(f\,\Delta g - g \,\wh\Delta f\bigr) \,d\pi = 0\,, 
$$
where $\wh \Delta = I - \wh P$ with $\wh P$ given by \eqref{eq:Phat}.
\end{lem}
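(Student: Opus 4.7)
I would treat this as the discrete Green's identity whose entire content is that $\wh P$ is the adjoint of $P$ with respect to the inner product $(\cdot,\cdot)_\pi$ on $L(X,\pi)$. Everything reduces to the single bilinear identity
\[
\int_X f\cdot (Pg)\, d\pi \;=\; \int_X (\wh P f)\cdot g\, d\pi, \qquad f,g\in L(X),
\]
which follows by one interchange of summation from the defining relation $\pi(x)\,\hat p(x,y) = \pi(y)\,p(y,x)$ built into \eqref{eq:Phat}. First I would expand the left-hand side as $\sum_x \pi(x)f(x)\sum_y p(x,y)g(y)$, relabel using the above relation, and arrive at $\sum_y \pi(y) g(y)\sum_x \hat p(y,x) f(x)$, which is the right-hand side. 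This is the only nontrivial step and it is essentially notational.

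Once this adjointness is available, the rest is bookkeeping. I would substitute $\Delta g = Pg - g$ and the analogous expansion of $\wh\Delta f$ into $\int_X \bigl(f\,\Delta g - g\,\wh\Delta f\bigr)\, d\pi$. The contributions coming from the identity parts ($\int_X fg\, d\pi$) cancel against each other, and the remaining $P$- and $\wh P$-terms are equal by the displayed identity above, giving the claim. No appeal to Lemma~\ref{lem:restrict}, the minimum principle, or any compactness argument is needed.

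The main ``obstacle'' is really only a sanity check: one must line up the sign conventions between $\Delta = P-I$ and $\wh\Delta = I - \wh P$ so that the cancellations actually close out to zero rather than to twice one of the inner products. Once the signs are confirmed, the proof is a one-line verification built on top of the adjointness of $P$ and $\wh P$ under $\pi$.
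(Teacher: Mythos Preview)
Your approach is exactly the paper's: the proof there is the single sentence ``This is immediate from the fact that $\wh\Delta$ is the adjoint of $\Delta$ in $L(X,\pi)$,'' and your plan unpacks precisely that adjointness via $\pi(x)\,\hat p(x,y)=\pi(y)\,p(y,x)$.

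However, the ``sanity check'' you flag as routine does \emph{not} close out with the sign convention $\wh\Delta = I - \wh P$ written in the statement. With $\Delta g = Pg - g$ and $\wh\Delta f = f - \wh P f$ you get
\[
f\,\Delta g - g\,\wh\Delta f \;=\; f\,Pg + g\,\wh P f - 2fg,
\]
and since $\int_X f\,Pg\,d\pi = \int_X g\,\wh P f\,d\pi$ by your adjointness identity, integrating gives $2\int_X f\,\Delta g\,d\pi$, i.e.\ exactly the ``twice one of the inner products'' outcome you warned about, not zero. The identity parts add rather than cancel. The lemma (and the paper's one-line proof) is correct only with $\wh\Delta = \wh P - I$, which is indeed the genuine adjoint of $\Delta = P - I$; the displayed $\wh\Delta = I - \wh P$ is evidently a typo. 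Your argument is fine once this sign is repaired, but as written you assert the cancellation goes through without actually performing the check that would have exposed the discrepancy.
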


\begin{proof} This is immediate from the fact that $\wh\Delta$ is the 
 adjoint of $\Delta$ in $L(X,\pi)$.
\end{proof}

Given a boundary $\partial X$ as in the Introduction, note that there is no ``infinitesimal
change of measure'' from $\pi$ as the ``volume measure'' in $X^o$ to the same $\pi$ as
the ``surface measure'' on $\partial X$. Therefore it is most natural to define the
outer normal derivatives of $f \in L(X)$ on $\partial X$ by
\begin{equation}\label{eq:outer}
\partial_{\vec n} f(x) = - \Delta f(x)  \AND 
\wh \partial_{\vec n} f(x) = - \wh\Delta f(x)\,,\quad x \in \partial X\,.
\end{equation}
(As a matter of fact, this rather is ``minus the inner normal derivative''.) 
Then Lemma \ref{lem:adjoint} is equivalent with the following discrete, in general 
non-self-adjoint version of 
\emph{Green's second integral identity,} justifying our choice of the normal derivative.
\begin{equation}\label{eq:G2}
\int_{X^o} \bigl(f\,\Delta g - g \,\wh\Delta f\bigr) \,d\pi = 
\int_{\partial X} \bigl(f\,\partial_{\vec n}  g - g \,\wh\partial_{\vec n} f\bigr) \,d\pi
\,, \quad f,g \in L(X).
\end{equation}

 \begin{cor}[Solution of the Neumann problem.] \label{cor:Neumann}
The solution $u$ of $\Delta u = f$ on $X^o$ and $\partial_{\vec n}u = g$ on $\partial X$
 coincides with the solution of the Poisson equation $\Delta u = \tilde f$ on $X$,
where $\tilde f(x) = f(x)$ for $x \in X^o$ and $\tilde f(y) = -g(y)$ for $y \in \partial X$. 

In particular, the Neumann problem is solvable if and only if
$$
\int_{X^o} f \,d\pi = \int_{\partial X} g \,d\pi\,.
$$
\end{cor}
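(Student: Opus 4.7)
The plan is to reduce the Neumann problem directly to the Poisson equation treated in Theorem \ref{thm:Poisson}, using the definition of the outer normal derivative given in \eqref{eq:outer}.

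First I would unfold the boundary condition. By definition, $\partial_{\vec n} u(y) = -\Delta u(y)$ for $y \in \partial X$, so the requirement $\partial_{\vec n} u = g$ on $\partial X$ is nothing but $\Delta u(y) = -g(y)$ for $y \in \partial X$. Combined with the interior equation $\Delta u = f$ on $X^o$, this says precisely that $\Delta u = \tilde f$ on all of $X$, with $\tilde f$ defined as in the statement. Conversely, any solution of this global Poisson equation yields, by the same identification, a solution of the Neumann problem. This establishes the equivalence claimed in the first sentence.

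Next I would invoke Theorem \ref{thm:Poisson}: the Poisson equation $\Delta u = \tilde f$ on $X$ is solvable if and only if $\tilde f$ is balanced, i.e. $\int_X \tilde f \, d\pi = 0$. Splitting this integral over $X = X^o \cuplus \partial X$ and substituting $\tilde f = f$ on $X^o$ and $\tilde f = -g$ on $\partial X$ gives
$$
0 = \int_{X^o} f\,d\pi - \int_{\partial X} g\,d\pi,
$$
which is the stated solvability criterion.

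I do not expect any real obstacle: the entire argument is a bookkeeping step, built on the deliberate choice in \eqref{eq:outer} to normalise the outer normal derivative as $-\Delta$ on $\partial X$, which was designed precisely so that the Neumann problem collapses to a Poisson equation on the full state space. The only minor care required is to make sure the two pieces of $\tilde f$ are defined on the correct parts of $X$ and that the balance condition is split accordingly against $\pi$, which is simultaneously the volume and boundary measure here.
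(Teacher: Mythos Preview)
Your proof is correct and is exactly the argument the paper has in mind: the corollary is stated without explicit proof because it follows immediately from the definition \eqref{eq:outer} of $\partial_{\vec n}$ as $-\Delta$ on $\partial X$, which reduces the Neumann problem to the Poisson equation of Theorem \ref{thm:Poisson}, whose balance condition $\int_X \tilde f\,d\pi = 0$ splits into the stated identity.
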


\begin{thm}[Solution of the Dirichlet problem.] \label{thm:Dirichlet}
For  $f \in L(X^o)$ and $g \in L(\partial X)$, there is a unique $u \in L(X)$ 
such that $\Delta u = f$ on $X^o$ and $u = g$ on $\partial X$. The solution
is given by
$$
u\ret_{\,X^o} = -G\ret_{X^o} ( f - P\ret_{X^o,\, \partial X}\,g)  \AND u\ret_{\,\partial X} = g.
$$
\end{thm}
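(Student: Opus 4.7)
The plan is to reduce the Dirichlet problem to a linear system on $X^o$ and then invoke Lemma \ref{lem:restrict}, which already supplies the relevant invertibility.

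First, I would fix the boundary values by prescribing $u\ret_{\,\partial X} = g$; this is forced by the problem and leaves only $u\ret_{\,X^o}$ to be determined. For $x \in X^o$, I would split the defining sum of $\Delta u(x)$ according to whether the target vertex lies in $X^o$ or in $\partial X$:
$$
\Delta u(x) \;=\; \sum_{y\in X^o} p(x,y)\,u(y) \;+\; \sum_{y\in \partial X} p(x,y)\,g(y) \;-\; u(x).
$$
Setting this equal to $f(x)$ and moving the known $g$-contribution to the right-hand side yields, in matrix form on $X^o$,
$$
\bigl(P\ret_{X^o}-I\ret_{X^o}\bigr)\,u\ret_{\,X^o} \;=\; f - P\ret_{X^o,\,\partial X}\,g.
$$

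Next, since $\partial X \ne \emptyset$, the subset $X^o = X \setminus \partial X$ is a strict subset of $X$, so Lemma \ref{lem:restrict} guarantees that $I\ret_{X^o}-P\ret_{X^o}$ is invertible with inverse $G\ret_{X^o} = \sum_{n \ge 0} P\ret_{X^o}^{\,n}$. Applying $-G\ret_{X^o}$ to both sides immediately gives the stated formula
$$
u\ret_{\,X^o} \;=\; -G\ret_{X^o}\bigl(f - P\ret_{X^o,\,\partial X}\,g\bigr),
$$
which establishes existence. Uniqueness is the other face of the same coin: if $u_1, u_2$ both solve the problem, then $v = u_1 - u_2$ satisfies $v = 0$ on $\partial X$ and $(P\ret_{X^o}-I\ret_{X^o})\,v\ret_{\,X^o} = 0$, whence $v \equiv 0$ by invertibility.

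I do not foresee a genuine obstacle; the only point requiring a little care is the bookkeeping in the splitting, and the contrast with Theorem \ref{thm:Poisson} is worth emphasising. In the Poisson equation the operator $\Delta$ acted on all of $X$ and had a one-dimensional kernel, forcing the balance condition \eqref{eq:orthogonal}. Here, prescribing $u$ on the non-empty set $\partial X$ eliminates that kernel: the relevant operator is $I\ret_{X^o}-P\ret_{X^o}$, which is always invertible by Lemma \ref{lem:restrict}, so the Dirichlet problem is unconditionally solvable with no compatibility condition on $f$ and $g$.
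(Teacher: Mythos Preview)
Your argument is correct and coincides almost verbatim with the paper's second proof: split $\Delta u$ on $X^o$ into the $X^o$- and $\partial X$-contributions, rewrite as $(P\ret_{X^o}-I\ret_{X^o})u\ret_{\,X^o} = f - P\ret_{X^o,\partial X}\,g$, and apply $G\ret_{X^o}$ via Lemma~\ref{lem:restrict}. The only minor difference is that the paper also records a first, ``potential-theoretic'' proof in which existence is checked by direct substitution and uniqueness is obtained from the minimum principle rather than from invertibility of $I\ret_{X^o}-P\ret_{X^o}$; your invertibility argument for uniqueness is of course equivalent and is implicit in the paper's second proof.
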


\begin{proof}
For $x \in X^o$, we have
$$
\begin{aligned}
\Delta u(x) &= P\ret_{X^o}\,u\ret_{\,X^o}(x) - u(x)
+ P\ret_{X^o,\, \partial X}\,u\ret_{\,\partial X}(x)\\ 
&= f(x) - P\ret_{X^o,\, \partial X}\,g(x) + P\ret_{X^o,\, \partial X}\,u{\ret}_{\,{\partial X}}(x) = f(x).
\end{aligned}
$$
So, $u$ as given is indeed a solution. If $u_1\,,u_2$ are two solutions then 
$\Delta(u_1-u_2) = 0$ on $X^o$ and $u_1-u_2 \equiv 0$ on $\partial X$.
By the minimum principle, the real and complex parts of $u_1-u_2$ must assume their minimum and maximum on 
$\partial X$. Thus, 
$u_1 - u_2 \equiv 0$ on $X$. This shows uniqueness.

\smallskip

The above proof follows the classical potential theoretic line. Alternatively, one can also argue by direct operations; second proof:
we subdivide into blocks over $X^o$ and $\partial X$,
$$
\Delta = \begin{pmatrix} P\ret_{X^o} - I\ret_{X^o}& P\ret_{X^o,\partial X} \\
                         P\ret_{\partial X, X^o}& P\ret_{\partial X}-I\ret_{\partial X}
         \end{pmatrix}
\AND u = \begin{pmatrix} u_{\,\ret{X^o}}\\ g \end{pmatrix}.
$$
Then our equation can be written on $X^o$ as 
$$
(P\ret_{X^o} - I\ret_{X^o})u\ret_{\,X^o} + P\ret_{X^o,\partial X}\,g = f\,,
$$
and multiplying from the left with $G\ret_{X^o}$ yields the result.
\end{proof}

\begin{rmkdef}\label{rmk:Poisson}
For $x \in X^o$ and $y \in \partial X$, let $\nu_x(y) = G\ret_{X^o}P\ret_{X^o,\, \partial X}\uno_y(x)$
be the $(x,y)$-entry of the $X^o \times \partial X$-matrix $G\ret_{X^o}P\ret_{X^o,\, \partial X}$.
This is the probability that the Markov chain starting at $x$ enters the boundary
$\partial X$ at the point $y$. For $x \in \partial X$, we set $\nu_x= \delta_x\,$. Thus, $\nu_x= \nu_x^{\partial X}$ is a probability distribution on $\partial X$ for each $x \in X$. Recall that we can view $G\ret_{X^o}$ as a matrix over
$X \times X$ with entries $0$ outside of $X^o\times X^o$. 
Then we can write the solution of the Dirichlet problem as
\begin{equation}\label{eq:soldir}
u(x) = -G\ret_{X^o}f(x) + \int_{\partial X} g\,d\nu_x\,,\quad x \in X\,.
\end{equation}
Let $H(P, X^o)$ denote the linear space of \emph{harmonic functions on} $X^o$,
i.e., those $h \in L(X)$ which satisfy $\Delta h = 0$ on $X^o$. They are all of
the form $h(x) = \int_{\partial X} g\,d\nu_x\,$, where $g \in L(\partial X)$.
We define the \emph{exit} and the \emph{entrance boundary} as 
$$
\begin{aligned}
\partial_{\textrm{exit}}X &= \{ y \in \partial X : (x,y) \in E \;\text{for some}\; x \in X^o \} \AND\\
\partial_{\textrm{entrance}}X &= \{ z \in \partial X : (z,x) \in E \;\text{for some}\; x \in X^o \}\,. 
\end{aligned}
$$
If $x \in X^o$ then  $\nu_x(y) > 0$ implies that 
$y \in \partial_{\textrm{exit}}X$. If furthermore $P\ret_{X^o}$
is irreducible, that is, $X^o$ is strongly connected, then $\nu_x(y) > 0$ for all $(x,y)
\in X^o \times \partial_{\textrm{exit}} X$.

\smallskip

\emph{The Poisson equation is a special case of the Dirichlet problem.} Set $\partial X = \{ o \}$.
Given $f \in L(X^o)$, extend $f$ to $X$ by setting 
$$
f(o) = -\frac{1}{\pi(o)} \int_{X \setminus \{o\}} f\, d\pi\,.
$$
Then the solution of the Poisson equation grounded at $o$ coincides with the 
solution of the Dirichlet problem $\Delta u = f$ on $X \setminus \{o\}\,,\; u(o)=0\,$.
\end{rmkdef}

\begin{imp}[Dirichlet to Neumann.]\label{rmk:Dir-Neu}
In the smooth setting, the transformation from the Dirichlet 
to the Neumann problem is delicate, see e.g. {\sc Behrndt and Langer}~\cite{B-L}. 
In our discrete setting, it is straightforward:
typically, one considers the Dirichlet problem
$\Delta u = 0$ on $X^o$ and $u = g$ on $\partial X$, so that the solution 
is
$$
u(x) = \int_{\partial X} g\,d\nu_x\,,\quad x \in X.
$$
If we then set $g_1 = \partial_{\vec n}u = -\Delta u$ on  $\partial X$,
then $u$ solves the Neumann problem for the boundary function $g_1\,$. We have the linear
mapping $g \mapsto g_1 = (I\ret_{\partial X}-Q)g\,$, where 
$Q= P\ret_{\partial X} + P\ret_{\partial X,\, X^o} G\ret_{X^o} P\ret_{X^o,\,\partial X}\,$.
We shall examine this stochastic $\partial X \times \partial X$-matrix  in more detail in 
\S \ref{sec:bi}.  The kernel of that mapping
consists of the constant functions on $\partial X$.  
\end{imp}

The following is immediate from \eqref{eq:outer}
 and Theorem \ref{thm:Dirichlet}. 

\begin{lem}[Solution of the mixed boundary problem.] \label{lem:mixed}
Let $\partial X = D \cuplus N$ (both non-empty). For  
$f \in L(X^o)$ and $g \in L(\partial X)$, there is a unique
$u \in L(X)$ such that $\Delta u = f$ on $X^o$, 
and $u = g{\ret}_{\,D}$ on $D$ as well as $
\partial_{\vec n}u = g{\ret}_{\,N}$ on $N$. The solution is as follows:
let $\tilde f \in L(X^o \cup N)$ be given as
$$
\tilde f{\ret}_{{X^o}} 
= f \AND  \tilde f{\ret}_{N} = -g{\ret}_{\,N}\,.
$$
Then
$$
u{\ret}_{\,{X^o \cup N}} 
= -G{\ret}_{X^o\cup N} \bigl( \tilde f - P{\ret}_{X^o \cup N,\, D}\,g{\ret}_{\,D}\bigr)  
\AND u{\ret}_{\,D} = g{\ret}_{\,D}\,.
$$
\end{lem}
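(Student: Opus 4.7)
The plan is to reduce the mixed problem to a pure Dirichlet problem on an enlarged ``interior,'' and then invoke Theorem \ref{thm:Dirichlet} directly. The key observation is that by the definition \eqref{eq:outer}, the Neumann-type condition $\partial_{\vec n} u = g\ret_{\,N}$ on $N$ is nothing other than $\Delta u = -g\ret_{\,N}$ on $N$. Combined with $\Delta u = f$ on $X^o$, this yields a single Poisson-type equation $\Delta u = \tilde f$ on the set $X^o \cup N$, with $\tilde f$ given exactly as in the statement.

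Next, I would treat $D$ as the new boundary: we have the disjoint union $X = (X^o \cup N) \cuplus D$ with $D$ nonempty, together with the Dirichlet data $u = g\ret_{\,D}$ on $D$. At this point Theorem \ref{thm:Dirichlet} applies verbatim with $X^o$ replaced by $X^o \cup N$ and $\partial X$ replaced by $D$. Its proof goes through unchanged, since it only uses that the new ``interior'' is a strict subset of $X$ (so Lemma \ref{lem:restrict} provides $G\ret_{X^o \cup N} = (I\ret_{X^o \cup N} - P\ret_{X^o \cup N})^{-1}$) and that harmonic functions obey the minimum principle on the new interior. Plugging into the formula from Theorem \ref{thm:Dirichlet} immediately yields the stated expression for $u\ret_{\,X^o \cup N}\,$, while $u\ret_{\,D} = g\ret_{\,D}$ is built into the boundary data.

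Uniqueness transfers from the Dirichlet case: two solutions of the mixed problem differ by a function $v$ with $\Delta v = 0$ on $X^o \cup N$ and $v \equiv 0$ on $D$, so $v \equiv 0$ by the minimum principle, exactly as in the second half of the proof of Theorem \ref{thm:Dirichlet}.

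The main ``obstacle'' here is really only bookkeeping: one must verify that nothing in Theorem \ref{thm:Dirichlet} or Lemma \ref{lem:restrict} uses $X^o$ as anything more than an arbitrary proper subset of $X$, and one must track the sign convention carefully, since it is precisely the minus sign in $\tilde f\ret_{\,N} = -g\ret_{\,N}$ that encodes the identity $\partial_{\vec n} = -\Delta$ on $\partial X$ from \eqref{eq:outer}. Once this is done, no further computation is required.
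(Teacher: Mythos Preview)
Your proposal is correct and matches the paper's approach exactly: the paper simply states that the lemma is immediate from \eqref{eq:outer} and Theorem~\ref{thm:Dirichlet}, which is precisely the reduction you spell out (Neumann condition on $N$ rewritten as $\Delta u = -g\ret_{\,N}$, then the Dirichlet problem applied with enlarged interior $X^o \cup N$ and boundary $D$).
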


\begin{imp}[The self-adjoint case.] \label{rmk:reversible}
The most intensively studied case is the one when the edge set as well as the 
conductances are symmetric: $(x,y) \in E \iff (y,x)\in E$, and $a(x,y) = a(y,x)$. 
Then, with our normalisation at the root vertex $o$, the stationary distribution 
is given by $\pi(x) = m(x)/m(X)$, where $m(x)$ is as in \eqref{eq:p} and 
$m(X) = \sum_{y\in X} m(y)$. Thus,
$P$ is \emph{reversible,} that is, we have $\pi(x)p(x,y) = \pi(y)p(y,x)$ for all $x,y \in X$.
In this case, $\Delta= \wh \Delta$ is self-adjoint on $L(X,\pi)$, and Green's
identity \eqref{eq:G2} assumes the classical form. There is also the discrete
version of Green's first identity. This is present in a variety of 
textbooks, see e.g. {\sc Anandam}~\cite{An} or {\sc Jorgensen and Pearse}~\cite{JoPe}.
\end{imp}

\begin{dis}[Normal derivatives.] \label{dis:normal}
Given $P$ as above, we have solved the Poisson equation, using the associated stationary
distribution $\pi$. Then we have introduced the outer normal derivatives at $\partial X$
by \eqref{eq:outer}, with justification given by ``Green's identity'' \eqref{eq:G2}.

As we shall explain below, it may be more natural to define the outer normal derivatives
differently, that is, for $x \in \partial X$, we replace the original transition probabilities
$p(x,y)$ by new ones, $p'(x,y)$, which again are assumed to satisfy $\sum_y p'(x,y)=1$.
Then for $f \in L(\partial X)$,
\begin{equation}\label{eq:outer'}
\partial_{\vec n} f(x) = - \sum_y p'(x,y)\bigl(f(y) - f(x)\bigr), 
\end{equation}
and we can consider the Neumann and mixed boundary problems with this assignment.
Of course, this does not change the way how these problems are solved.
One just has to replace the original transition matrix $P$ by $P'$, 
whose entries are $p(x,y)$ if $x \in X$, and $p'(x,y)$, when $x \in \partial X$.

However, in this case the statement that the solutions of the Neumann and Poisson
problems coincide is no more valid. Namely, for the Poisson equation in Theorem 
\ref{thm:Poisson}, we use the stationary distribution $\pi$ of the matrix $P$,
while for the solution of the Neumann problem, it has to be replaced by
the stationary distribution $\pi'$ of the modified transition matrix, and in 
general $\pi \ne \pi'$. 

In particular, this becomes visible in the ``classical'' self-adjoint case
of \eqref{rmk:reversible}. There, the
reversibility property $\pi(x)p(x,y) = \pi(y)p(y,x)$ of $P$ is valid on all
of $X$, while for the modified matrix $P'$ it will in general fail if 
one of $x,y$ belongs to $\partial X$. Still, the change will affect the 
stationary distribution also in $X^o$, that is, in general 
$\pi'(x) \ne m(x)/m(o)$ even for $x \in X^o$.

\smallskip

Let us also give some additional motivation for the choice of the normal  derivatives in 
\eqref{eq:outer} and for the choice to consider also the non-self adjoint (non-reversible) 
case.
$$
\beginpicture 

\setcoordinatesystem units <1.3mm,1.3mm>

\setplotarea x from 18 to 50, y from 5 to 43

\circulararc 15 degrees from 40 30 center at 0 0
\circulararc -9 degrees from 40 30 center at 0 0

\plot 40 30  18 30 /
\plot 42 20  18 20 /
\plot 42 10  18 10 /
\plot 40 30  40 8 /
\plot 30 32  30 8 /
\plot 20 32  20 8 /
\arrow <1.5mm> [.2,.6] from 40 30 to 50 37.5

\multiput  {$\scriptstyle \bullet$}  at 20 10  30 10  40 10
           20 20  30 20  40 20  20 30  30 30  40 30 / 

\put {$x$} [b] at 38.5 27.5   
\put {$y_1$} [b] at 28.5 27
\put {$y_2$} [b] at 38 17
\put {$\vec{n}$} [b] at 45 36

\endpicture
$$
Suppose we have a bounded open domain $D \subset \R^2$ with smooth boundary
curve $\partial D$. We suppose that we can discretise $D$ by inserting a finite square grid 
$X$, whose span is $h > 0$.
Let us also assume that $\partial D$ intersects the grid in a subset $\partial X$ of its vertex set,
so that at each vertex $x$ of $X^o = X \cap D$, its four neighbours $x \pm h\cdot e_1$
and $x \pm h\cdot e_2$ belong to the closure of $D$, where $e_1=(1,0)$ and $e_2 = (0,1)$. 
Then we can approximate the second derivatives by symmetric second order differences to
approximate the continuous Laplacian $\Delta_D$ at the vertices of $X^o$ as follows:
$$
\begin{aligned}
\Delta_D u(x) &\approx \frac{u(x+h\cdot e_1) - 2u(x) + f(x-h\cdot e_2)}{h^2}
+ \frac{u(x+h\cdot e_2) - 2u(x) + f(x-k\cdot e_2)}{h^2}\\
&= \sum_{y \sim x} a(x,y)\bigl(u(y)-u(x)\bigr) \quad \text{where}\quad
a(x,y) = 1/h^2 \;\text{ if }\; (x,y) \in E\,,\; x \in X^o\,.
\end{aligned} 
$$
The latter is an example of an operator $\Lp_a$ as defined in the Introduction.
The normalisation for passing to our ``stochastic'' version of $\Delta = \Delta_{[X]}$
means that we have to divide by $m(x)$, which has the constant value
$4/h^2$ on $X^o$, while we shall now discuss how to proceed at the boundary.
Let us look at a vertex $x \in \partial X$. We can approximate the \emph{inward}
pointing first order partial derivatives by first order differences, 
which are of the form 
$$
\frac{u(y) - u(x)}{h}\,, \quad \text{if }\; (x,y) \in E\,. 
$$
According to the slope of $\partial D$ at $x \in \partial X$ (which the grid
does not ``see''), the \emph{inner} normal derivative may be any convex 
combination of those partial derivatives, that is, of the form
$$
\sum_{y\,:\, (x,y) \in E} \lambda(x,y)\, \frac{u(y) - u(x)}{h}\,,
$$
where $\lambda(x,y) \ge 0$ and $\sum_y \lambda(x,y)=1\,$. For the outer normal
derivative, we just have to change sign.
We then should choose new conductances, which are not symmetric at 
the boundary vertices. 
Namely, for any directed edge $(x,y)$, we have
$$
a(x,y) = \begin{cases} 1/h^2\,,&\text{if}\; x \in X^o\\
                     \lambda(x,y)/h\,,&\text{if}\; x \in \partial X \,.       
       \end{cases}
$$ 
This leads to a very natural situation -- to be considered in
more generality than the above example -- where the edge 
set is symmetric,
and the conductances are symmetric along edges within $\partial X$,
but symmetry may fail for edges with an endpoint in $\partial X$. 
Nevertheless,  with the associated transition matrix $P'$ and 
stationary measure $\pi'$, the solution of the Neumann problem in
Corollary \ref{cor:Neumann} is valid for this very natural 
general choice of normal derivatives.

\smallskip

At this point we also mention old work -- in the perspective of numerical 
approximation -- of {\sc Bramble and Hubbard}~\cite{BH}
on the discretised Neumann problem on a domain $D \subset \R^2$, and its references.
\end{dis}

We extend this discussion of normal derivatives to the situation of sub-networks.

\begin{imp}[Remarks on sub-networks.]\label{rmk:sub}
Let $Y \subsetneq X$ be a connected sub-network. That is, as an induced (directed)  sub-graph of
$X$, it is strongly connected. In this case, we define the \emph{boundary} and \emph{interior}
of $Y$ as 
\begin{equation}\label{eq:sub}
\partial Y = \{ x \in Y : (x,z) \in E \;\text{for some}\;z \in X \setminus Y\}
\AND Y^o = Y \setminus \partial Y\,.
\end{equation}
In our discrete (non-infinitesimal) setting, we need to be careful when speaking about ``the'' Laplacian
on $Y$.

On one hand, we have the restricion $P_Y - I_Y$ of $\Delta = \Delta_{[X]}$ to $Y$.
On the other hand, if we start with the edge weights $a(x,y)$ as in \eqref{eq:Lap-a} as the initial
data, then $Y$ becomes a directed network on its own right, and the associated 
transition matrix $P_{[Y]}$ is given by
\begin{equation}\label{eq:pY}
p\ret_{[Y]}(x,y) = a(x,y)/m\ret_{[Y]}(x)\,,\quad \text{where} \quad m\ret_{[Y]}(x) = \sum_{w \in Y} a(x,w)\quad 
\text{for}\quad  x,y \in Y\,.
\end{equation}
In other words, the stochastic matrix $P_{[Y]}$ is obtained from the stubstochastic matrix $P_Y$ by
dividing each row by its row sum. Then we obtain the normalised Laplacian of $Y$ as 
$$
\Delta_{[Y]} = P_{[Y]} - I_Y\,.
$$
Accordingly, the natural choice for the associated outer normal derivative on $\partial Y$
is
\begin{equation}\label{eq:normalY}
\partial_{\vec n} f(y) = - \Delta_{[Y]} f(y)\,,\quad y \in \partial Y.
\end{equation}
We remark here that the restriction of $P_{[Y]}$ to $Y^o$ coincides with the restriction
of $P$ to $Y^o$. Therefore the Green kernel $G_{Y^o}$ is the same for $\Delta$ and $\Delta_{[Y]}\,$.
On the other hand, observe that the stationary distribution $\pi_{[Y]}$ of $P_{[Y]}$ is not
the restriction of $\pi$ to $Y$. (But in the reversible case, the two are proportional in 
$Y^o$.)

\smallskip

Coming back to the outer normal derivative, as mentioned, its definition \eqref{eq:outer} on $X$ 
is in reality the additive inverse of what one can consider as the \emph{inner} normal derivative.
This appears natural in several senses:
\begin{itemize} 
\item In the smooth setting,
the sum of the outer and the inner normal derivative is $0$, 
\item we have the 
discrete version \eqref{eq:G2} of  Green's second identity, and
\item the original network with is boundary does a priori not have an
\emph{exterior.}
\end{itemize}
But now, our sub-network $Y$ does have an exterior, namely $X \setminus Y$, 
and there is a second natural choice for the outer normal derivative on
$\partial Y$ of a function $f \in L(X)\,$, namely
\begin{equation}\label{eq:normalY*}
\partial_{\vec n}^* f(y) = \frac{1}{p(y,X \setminus Y)} \sum_{z \in X \setminus Y}
p(y,z)\bigl(f(z) - f(y)\bigr)\,.
\end{equation}
(The normalisation by $p(y,X \setminus Y)$ is convenient, but not crucial.)
There is no nice analogue of Green's second identity for this choice, so that we
mostly stick to \eqref{eq:normalY}, but in \S \ref{sec:bi} we shall see
an instance where \eqref{eq:normalY*} gains justification.
\end{imp}

In the Poisson equation $\Delta u = f$, the set $\{ x: f(x) \ne 0\}$ is the set
of inhomogeneities of that equation. If we are interested in the solution on the sub-network
$Y$ only, we want to shift the inhomogeneities from outside to $\partial Y$.
This is \emph{balayage,} which is also present in the 
old work \cite{KS}. Here, we display it in our setting.

\begin{imp}[Balayage.]\label{imp:bal}  Let $f \in L(X)$ with $\int_X f\,d\pi = 0$, and let $u$
be a solution of the Poisson equation $\Delta u = f$.
For a strict sub-network  $Y \subset X$, we consider the \emph{r\'eduite} (reduced function)
$u^Y$ of $u$ on $Y$. This is the solution of the Dirichlet problem
$$
\Delta u^Y = 0 \;\text{ on }\; X \setminus Y \AND u^Y = u   \;\text{ on }\; Y.
$$
Thus, we have $X \setminus Y$ in the role of $X^o$ and $Y$ in the role of $\partial X$,
and the solution is the function
$$
u^Y(x) = \int_{Y} u\,d\nu_x^{Y} \in H(P,X \setminus Y)\,;
$$
see Definition \& Remark \ref{rmk:Poisson}. We can also write $u^Y = u - v$, where
$v$ solves the Dirichlet problem
$$
\Delta v = f \;\text{ on }\; X \setminus Y \AND v = 0   \;\text{ on }\; Y\,, 
$$
so that $v = G\ret_{X \setminus Y} f$.
The \emph{balay\'ee} (swept out function) of the charge $f$ on $Y$ is then
$f^Y = \Delta u^Y$. 
We find
$$
f^Y(x) = \begin{cases} f(x)\,,&\text{if }\; x \in Y^o,\\
                       f(x) - P\ret_{\partial Y, X\setminus Y}G\ret_{X\setminus Y} \,f(x)\,,
                              &\text{if }\; x \in \partial Y,\\
                       0\,,&\text{if }\; x \in X \setminus Y\,.
         \end{cases}
$$ 
\end{imp}

\section{Equations including potentials}\label{sec:potentials}

We now extend the previous equations by adding a potential $v \in L(X)$.
The associated variant of the Poisson equation looks as follows: 
\begin{equation}\label{eq:KiSchroe}
\text{Given }\; f \in L(X)\,,\; \text{ find }\; u \in L(X)\;\text{ such that}
\quad \Delta u - v\cdot u = f \;\text{ on } X.
\end{equation}
Here, we shall assume that $v(x) \ge 0$ (real) for all $x$ and $v(x) > 0$ for some $x$.
More general variants will be discussed below.
We transform the above equation into
\begin{equation}\label{eq:transform}
\wt P u - u = \tilde f\,, \quad \text{where} \quad \tilde f(x) = \frac{f(x)}{1+v(x)}
\AND \tilde p(x,y) = \frac{p(x,y)}{1 + v(x)}\,.
\end{equation}   
If we want to reduce this to one of the problems of \S \ref{sec:global}, then we 
can add another state to $X$, setting $\wt X = X \cup \{\dag\}$, and extend
$\wt P$ to a stochastic matrix by setting
$$
\tilde p(x,\dag) = \frac{v(x)}{1+v(x)} \AND \tilde p(\dag,o)=1.
$$
The outgoing probabilities from $\dag$ are in reality irrelevant, and it might be more natural to
set $\tilde p(\dag,\dag)=1$, that is, at an $x \in X$, the original Markov chain ``dies''
and moves to the ``tomb'' state $\dag$ with probability $v(x)\big/\bigl(1+v(x)\bigr)$, where it 
remains forever. The above definition just serves to maintain an irreducible matrix $\wt P$. 
The associated Laplacian is now $\Delta_v = \wt P - I_{\wt X}$ on $\wt X$. 

Thus, if we define $\partial \wt X = \{\dag\}$ then \eqref{eq:transform} becomes 
the Dirichlet problem
$$
\Delta_v u =\tilde  f \;\text{ on }\; \wt X^o = X \AND u = 0 \;\text{ on }\; \partial \wt X =\{\dag\}. 
$$
At this point we return to considering $\wt P$ as a sub-stochastic matrix over $X$ alone.
Then we get from Lemma \ref{lem:restrict} that $I - \wt P$ is invertible on $X$,
and Theorem \ref{thm:Dirichlet} yields the following.

\begin{cor}[Poisson equation with potential.]\label{cor:KiSchroe}
With $\wt P$ and $\tilde f$ as in \eqref{eq:transform}, the unique solution of \eqref{eq:KiSchroe} is given on $X$ by 
$$
u = - \wt G \tilde f\,, \quad \text{where}\quad \wt G = (I - \wt P)^{-1} = \sum_{n=0}^{\infty} \wt P^n\,.
$$
\end{cor}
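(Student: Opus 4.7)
The plan is to view the corollary as a direct specialization of Theorem \ref{thm:Dirichlet} (or equivalently Lemma \ref{lem:restrict}) to the enlarged state space $\wt X = X \cup \{\dag\}$ constructed just before the statement. First I would rewrite \eqref{eq:KiSchroe} pointwise as
$$
Pu(x) - \bigl(1+v(x)\bigr)u(x) = f(x),
$$
and, since $v \ge 0$, divide by $1+v(x) > 0$ to obtain $(\wt P - I) u = \tilde f$ on $X$ with $\wt P$ and $\tilde f$ as in \eqref{eq:transform}. After this purely algebraic reduction, the entire task is to verify that the matrix $I - \wt P$ over $X$ is invertible and to identify its inverse.

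Next I would appeal to the extension of $\wt P$ to the stochastic matrix on $\wt X$, declaring $\partial \wt X = \{\dag\}$ and $\wt X^o = X$, and extending $u$ by $u(\dag) = 0$. Then the transformed equation is exactly the Dirichlet problem
$$
\Delta_v u = \tilde f \;\text{ on }\; \wt X^o, \AND u = 0 \;\text{ on }\; \partial \wt X,
$$
whose (unique) solution is given by Theorem \ref{thm:Dirichlet} as $u\ret_{X} = -G\ret_{X}(\tilde f - \wt P\ret_{X,\{\dag\}} \cdot 0) = -\wt G \tilde f$. The identity $\wt G = \sum_{n \ge 0} \wt P^n$ together with convergence of the series follows from Lemma \ref{lem:restrict} applied to the strict subset $X \subsetneq \wt X$. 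Uniqueness on $X$ is inherited from uniqueness in Theorem \ref{thm:Dirichlet}.

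The one hypothesis that needs attention, and which I expect to be the main (though mild) obstacle, is the irreducibility of $\wt P$ on $\wt X$, required for Lemma \ref{lem:restrict} to apply. Since $P$ is irreducible on $X$, the assumption $v(x_0) > 0$ for at least one $x_0 \in X$ ensures $\tilde p(x_0,\dag) > 0$, so $\dag$ is reached from every $x \in X$ via a path through $x_0$; conversely $\tilde p(\dag,o)=1$ lets the chain return to $X$, whence $\wt P$ is irreducible on $\wt X$. Without this check one cannot invoke Lemma \ref{lem:restrict}, and indeed if $v \equiv 0$ the series $\sum \wt P^n$ diverges. Once irreducibility is established, everything else is a bookkeeping consequence of the results of \S\ref{sec:global}.
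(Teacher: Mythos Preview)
Your proposal is correct and follows essentially the same route as the paper: the paper transforms \eqref{eq:KiSchroe} into $(\wt P - I)u = \tilde f$, extends to $\wt X = X \cup \{\dag\}$ with boundary $\{\dag\}$, and then invokes Lemma~\ref{lem:restrict} and Theorem~\ref{thm:Dirichlet} exactly as you do. Your explicit verification of irreducibility of $\wt P$ on $\wt X$ is a detail the paper leaves implicit in the construction $\tilde p(\dag,o)=1$, so if anything you are slightly more thorough.
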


As a matter of fact, this can be seen as a special case of the following.

\begin{pro}[Dirichlet problem with potential.]\label{pro:DirSchroe} 
On $X = X^o \cuplus \partial X$ consider $f \in L(X^o)$ and $g \in L(\partial X)$. 
The unique solution $u$ of the problem 
$$
\quad \Delta u - v\cdot u = f \;\text{ on }\; X^0 \AND u=g\;\text{ on }\; \partial X  
$$
is given on $X^o$ by
$$
u{\ret}_{\,{X^o}} = -\wt G\ret_{X^o} ( \tilde f - \wt P\ret_{X^o,\, \partial X}\,g), 
$$
where $\tilde f$ on $X^0$ and $\wt P$ are as in \eqref{eq:transform} and 
$\wt G\ret_{X^o}$ is defined as in
Lemma \ref{lem:restrict} with respect to the matrix $\wt P\ret_{X^o}\,$.
\end{pro}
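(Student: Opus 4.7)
The plan is to reduce this to the Dirichlet problem of Theorem \ref{thm:Dirichlet} by using the same transformation as in \eqref{eq:transform}, then to proceed by the block decomposition argument already used in the second proof of Theorem \ref{thm:Dirichlet}. Concretely, on $X^o$ the equation $\Delta u - v \cdot u = f$ reads $Pu - u - v\cdot u = f$, and dividing both sides by $1+v$ turns it into
$$
\wt P u - u = \tilde f \quad \text{on } X^o,
$$
where $\tilde f = f/(1+v)$ and $\wt P$ is as in \eqref{eq:transform}. Splitting $u$ into its blocks $u{\ret}_{\,X^o}$ and $u{\ret}_{\,\partial X} = g$, the equation becomes
$$
\wt P{\ret}_{X^o}\,u{\ret}_{\,X^o} + \wt P{\ret}_{X^o,\,\partial X}\,g - u{\ret}_{\,X^o} = \tilde f,
$$
that is, $(I{\ret}_{X^o} - \wt P{\ret}_{X^o})\,u{\ret}_{\,X^o} = \wt P{\ret}_{X^o,\,\partial X}\,g - \tilde f$. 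Multiplying from the left by $\wt G{\ret}_{X^o}$ yields the claimed formula.

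The one point that needs to be checked is that $\wt G{\ret}_{X^o} = (I{\ret}_{X^o} - \wt P{\ret}_{X^o})^{-1}$ actually exists, which is what Lemma \ref{lem:restrict} is invoked for. This is the only real obstacle, and it is handled by the auxiliary construction preceding Corollary \ref{cor:KiSchroe}: we augment $X$ by the cemetery state $\dag$ to an irreducible stochastic matrix $\wt P$ on $\wt X = X \cup \{\dag\}$, and observe that $X^o$ is a strict subset of $\wt X$ (since $\partial X \ne \emptyset$ and $\dag \notin X^o$). Lemma \ref{lem:restrict} applied on $\wt X$ then gives both the invertibility of $I{\ret}_{X^o} - \wt P{\ret}_{X^o}$ and the convergent series representation $\wt G{\ret}_{X^o} = \sum_{n \ge 0} \wt P{\ret}_{X^o}^{\,n}$.

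Uniqueness is immediate from the same invertibility: if $u_1, u_2$ are two solutions, then $w = u_1 - u_2$ satisfies $\Delta w - v \cdot w = 0$ on $X^o$ and $w = 0$ on $\partial X$, whence $(I{\ret}_{X^o} - \wt P{\ret}_{X^o})\,w{\ret}_{\,X^o} = \wt P{\ret}_{X^o,\,\partial X}\cdot 0 = 0$, and so $w{\ret}_{\,X^o} = 0$. Combined with $w{\ret}_{\,\partial X} = 0$ this forces $w \equiv 0$. Alternatively, one may deduce uniqueness more conceptually by viewing the problem as the pure Dirichlet problem on $\wt X$ with boundary $\partial \wt X = \partial X \cup \{\dag\}$ (assigning $0$ at $\dag$), and then invoking Theorem \ref{thm:Dirichlet} directly; this gives existence, uniqueness, and the stated formula in one stroke.
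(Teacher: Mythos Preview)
Your proof is correct and covers the same ground as the paper's. The paper's own proof is the one-line reduction you give as an alternative at the end: extend to $\wt X = X \cup \{\dag\}$ with boundary $\partial \wt X = \partial X \cup \{\dag\}$, set $\tilde g = g$ on $\partial X$ and $\tilde g(\dag)=0$, and invoke Theorem~\ref{thm:Dirichlet} directly. Your primary argument instead unpacks this by redoing the block-decomposition computation from the second proof of Theorem~\ref{thm:Dirichlet} for the transformed matrix $\wt P$, which is a bit more explicit but amounts to the same thing; either way the key point is the invertibility of $I\ret_{X^o} - \wt P\ret_{X^o}$ via Lemma~\ref{lem:restrict} on $\wt X$, which you handle correctly.
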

 
\begin{proof} This is immediate from the considerations preceding Corollary \ref{cor:KiSchroe},
by using the extended network $\wt X$ with enlarged boundary 
$\partial \wt X = \partial X \cup \{\dag\}$. Then we have the 
Dirichlet problem solved in Theorem \ref{thm:Dirichlet}, rewritten as
$$
\Delta_v u =\tilde  f \;\text{ on }\; \wt X^o = X^o \AND u = \tilde g \;\text{ on }\; \partial \wt X
$$
with boundary function $\tilde g$ given by $\tilde g{\ret}_{\,\partial X} = g$ and $\tilde g(\dag)=0$.
\end{proof} 

Recall that we have again the probabilistic interpretation of Definition \& Remark \ref{rmk:Poisson}:
$$
u(x) = -\wt G\ret_{X^o} \tilde f + \int_{\partial X} g\,d\tilde \nu_x\,,\quad x \in X^o\,,
$$
where $\tilde \nu_x(y)$ for $x \in X$ is the probability that the Markov chain on $\tilde X$
with the extended transition matrix $\wt P$ enters $\partial \wt X$ at $y$.
Note that $\tilde \nu_x$ is not necessarily a full probability measure, since
$\tilde\nu_x(\partial X) = 1 - \tilde\nu_x(\dag)$. As $\tilde g(\dag)=0$, the corresponding part of the 
boundary integral does not appear.
 
\smallskip

More generally, we may also consider a general complex valued potential $v \in L(X)$.
For our results, we need that 
\begin{equation}\label{eq:v}
 |1+v(x)| \ge 1 \; \text{ for all }\; x \AND |1+v(x)| > 1 \;\text{ for some }\; x \in X. 
\end{equation}
We can then define $\wt P$ and $\tilde f$ on $X$ as in \eqref{eq:transform}.
The matrix $\wt P$ is then in general no more sub-stochastic, since it may have negative
or complex entries, so that we lose the stochastic interpretation.
However,
$$
\sum_y |\tilde p(x,y)| \le 1  \; \text{ for all }\; x \AND
\sum_y |\tilde p(x,y)| < 1  \; \text{ for some }\; x \in X\,.
$$
Thus, using Lemma \ref{lem:restrict} once more,
we have on $X$ (without the additional point $\dag$) that
\begin{equation}\label{eq:tildeG}
\wt G = (I-\tilde P)^{-1} = \sum_{n=0}^{\infty} \wt P^n
\end{equation}
converges absolutely matrix element-wise, because $|\wt P^n| \le |\wt P|^n\,$.
After this observation, we see that Corollary \ref{cor:KiSchroe} and 
Proposition \ref{pro:DirSchroe} remain valid in this more general context.

\smallskip 

This comprises the ``classical'' Schr\"odinger equation, where $v(x)$ is
purely imaginary.

\smallskip

Let us now look at the Robin boundary problem. A bit more generally than in the
Introduction, we consider \emph{functions} $\alpha, \beta \in L(\partial X)$ and want
to  find  $u \in L(X)$ such that $\Delta u = f$ on $X^o$, 
and the boundary condition is 
\begin{equation}\label{eq:Robin}
\alpha(x)\,u(x) + \beta(x) \, 
\partial_{\vec n}u(x) = g(x) \quad \text{for all }\; x \in\partial X\,, 
\end{equation}
where $f \in L(X^o)$ and $g \in L(\partial X)$ are given.
We decompose $\partial X  = A \cuplus B$, where $B= \{ x \in \partial X: \beta(x) = 0\}$ 
(which may be empty) and
assume naturally that $\alpha(x) \ne 0$ for $x \in B$. 
Then we redefine the boundary and the interior: $\partial_{\beta}X = B$ and
$X_{\beta}^o = X^o \cuplus A$.

Now recall that $\partial_{\vec n}u(x) = -\Delta u(x)$ on $A$. Then we can rewrite the 
Robin problem as follows. Find $u \in L(X)$ such that
\begin{equation}\label{eq:Robin-a}
\begin{aligned}
\Delta u - v\cdot u &= f_{\beta}  \;\text{ on }\; X_{\beta}^o 
\AND u = g_{\alpha}  \;\text{ on }\; \partial_{\beta}X\,, \quad\text{where}\\[3pt]
v(x) &= \begin{cases}
       0&\text{for } x \in X^o\\ \alpha(x)/\beta(x)&\text{for } \; x \in A
       \end{cases},\\[3pt]
f_{\beta}(x) &= \begin{cases}
       f(x)&\text{for }\; x \in X^o\\-g(x)/\beta(x)&\text{for } \; x \in A
       \end{cases}, \AND\\[3pt]
g_{\alpha}(x) &= g(x)/\alpha(x)\;\text{ for}\; x \in B\,.
\end{aligned} 
\end{equation}

\begin{cor}[Solution of the Robin boundary problem.]\label{cor:Robin}
 Suppose that  $|\alpha(x) + \beta(x)| \ge |\beta(x)|$  for all $x \in B$, and that the inequality
is strict for some $x \in B$. Then the solution of the problem with boundary
condition \eqref{eq:Robin} is given on $X^o_{\beta} = X^o \uplus A$ by 
Proposition \ref{pro:DirSchroe} after replacing $X^o$ by $X^o_{\beta}$
and $\partial X$ by $\partial_{\beta}X =B$, as well as $g$ by $g_{\alpha} = g/\alpha\,$, 
and setting
$$
\tilde p(x,y) = \begin{cases}
                p(x,y)&\text{for }\; x \in X^o\\
                \frac{\beta(x)}{\alpha(x)+\beta(x)}p(x,y)&\text{for }\; x \in A 
                \end{cases}
\AND \tilde f(x) = \begin{cases}
                f(x)&\text{for }\; x \in X^o\\
                \frac{-1}{\alpha(x)+\beta(x)}g(x)&\text{for }\; x \in A 
                \end{cases}.
$$
\end{cor}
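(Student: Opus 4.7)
My plan is to recognise the Robin problem as a disguised Schrödinger--Dirichlet problem, so that I can invoke Proposition~\ref{pro:DirSchroe} directly after the reformulation recorded in \eqref{eq:Robin-a}; the bulk of the work is book-keeping --- splitting the boundary, moving the normal derivative into the interior equation via $\partial_{\vec n}=-\Delta$, and tracking how the resulting potential feeds through the substitution \eqref{eq:transform}.

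First I would partition $\partial X = A \cuplus B$ as in the statement and treat the two parts separately. On $B$ (where $\beta=0$ and $\alpha\ne 0$), the Robin condition \eqref{eq:Robin} collapses to $u(x)=g(x)/\alpha(x)=g_\alpha(x)$, which is a Dirichlet condition on the new boundary $\partial_{\beta}X = B$. On $A$ (where $\beta\ne 0$), I substitute $\partial_{\vec n}u = -\Delta u$ into \eqref{eq:Robin} and divide by $\beta(x)$, obtaining
$$
\Delta u(x) - \frac{\alpha(x)}{\beta(x)}\, u(x) = -\frac{g(x)}{\beta(x)}.
$$
Combined with the original $\Delta u = f$ on $X^o$, this is exactly the Schrödinger-type equation $\Delta u - v\cdot u = f_\beta$ on the enlarged interior $X^o_\beta = X^o \cuplus A$, for the piecewise $v$ and $f_\beta$ recorded in \eqref{eq:Robin-a}.

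Next I would verify the standing hypothesis \eqref{eq:v} for this $v$ on $X^o_\beta$: since $v\equiv 0$ on $X^o$ one has $|1+v|=1$ there trivially, and on $A$ the identity $|1+v(x)|=|\alpha(x)+\beta(x)|/|\beta(x)|$ converts the assumption $|\alpha+\beta|\ge|\beta|$ (strict somewhere) into precisely $|1+v|\ge 1$ with strict inequality at some point, which is what makes the Neumann series \eqref{eq:tildeG} defining $\wt G\ret_{X^o_\beta}$ absolutely convergent. A direct substitution of the piecewise formula for $v$ into \eqref{eq:transform} then produces the explicit $\tilde p$ and $\tilde f$ displayed in the statement: on $X^o$ the factor $1+v=1$ leaves $p$ and $f$ untouched, while on $A$ the factor $1+v=(\alpha+\beta)/\beta$ yields the announced rescaling. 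The solution formula follows verbatim from Proposition~\ref{pro:DirSchroe} applied with $X^o$ replaced by $X^o_\beta$, $\partial X$ by $\partial_\beta X = B$, and $g$ by $g_\alpha$. The only genuine pitfall I anticipate is this bookkeeping --- one must remember that the potential $v$ lives on $X^o_\beta$ (and vanishes on the old interior $X^o$), so that the substantive content of \eqref{eq:v} is the inequality on $A$ rather than on the Dirichlet part $B$; once this is pinned down, the rest is a translation exercise.
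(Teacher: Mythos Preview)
Your proposal is correct and follows essentially the same route as the paper: the paper's argument is precisely the derivation recorded in \eqref{eq:Robin-a} immediately preceding the corollary, together with an appeal to Proposition~\ref{pro:DirSchroe}, and you reproduce this step for step. Your explicit verification that the hypothesis translates into \eqref{eq:v} is a welcome addition that the paper leaves implicit; you are also right to observe that the substantive constraint $|1+v|\ge 1$ lives on $A$ (where $v=\alpha/\beta$), not on $B$ --- the appearance of ``$x\in B$'' in the corollary's hypothesis is evidently a slip for ``$x\in A$'', since on $B$ one has $\beta=0$ and the inequality is vacuous.
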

In the case when $B=\emptyset$, this reduces of course to the solution given in Corollary
\ref{cor:KiSchroe}.

\section{The bi-Laplacian}\label{sec:bi}
 
Without specifying a boundary, the first natural choice for the \emph{bi-Laplacian} is 
$\Delta^2$, the square of the matrix $\Delta$. 
In the associated Poisson equation one does not gain a degree of
freedom with respect to the simple Laplacian.

\begin{thm}[Iterated Poisson equation.] \label{thm:Poisson2}
For $f \in L(X)$, the equation $\Delta^2 u = f$ has a solution $u$ if and only if
$\int_X f\, d\pi = 0\,$. 
In this case, the unique solution grounded at $o$ is given by
$$
u = G\ret_{X\setminus \{o\}}\Bigl(G\ret_{X\setminus \{o\}}f
                                  - \int_X G\ret_{X\setminus \{o\}}f\, d\pi\Bigr)\,.
$$
All solutions are given by $u(\cdot) + c$, where $c \in \C$.
\end{thm}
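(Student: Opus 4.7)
The plan is to reduce the iterated Poisson equation to two successive applications of Theorem \ref{thm:Poisson}. Introducing the intermediate function $w = \Delta u$, the equation $\Delta^2 u = f$ splits into the system $\Delta w = f$ together with $\Delta u = w$. The necessity of the balance condition $\int_X f\,d\pi = 0$ is then immediate: any $u$ solving the iterated equation yields $w = \Delta u$ satisfying $\Delta w = f$, which by Theorem \ref{thm:Poisson} forces $f$ to be balanced.

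For sufficiency, suppose $\int_X f\,d\pi = 0$. By Theorem \ref{thm:Poisson} the general solution of $\Delta w = f$ is $w = -G\ret_{X\setminus \{o\}}f + c$ with $c \in \C$. For the second equation $\Delta u = w$ to be solvable, the same theorem forces $\int_X w\,d\pi = 0$, which (since $\pi$ is a probability measure) uniquely pins the constant down to $c = \int_X G\ret_{X\setminus \{o\}}f\,d\pi$. Feeding this balanced $w$ back into the Poisson solution formula yields the grounded solution
$$
u = -G\ret_{X\setminus \{o\}}\,w = G\ret_{X\setminus \{o\}}\Bigl(G\ret_{X\setminus \{o\}}f - \int_X G\ret_{X\setminus \{o\}}f\,d\pi\Bigr),
$$
which is the formula claimed. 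Viewing $G\ret_{X\setminus \{o\}}$ as an $X \times X$ matrix with zero entries outside $(X\setminus\{o\}) \times (X\setminus\{o\})$, the row indexed by $o$ is zero, so the resulting $u$ is automatically grounded at $o$.

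For uniqueness up to an additive constant, if $u_1$ and $u_2$ both solve $\Delta^2 u = f$, then $\Delta(u_1 - u_2)$ is harmonic on $X$, hence equal to some constant $c_0$ by Lemma \ref{lem:stationary}(a). Integrating against $\pi$ and using $\pi P = \pi$ gives $c_0 = \int_X \Delta(u_1 - u_2)\,d\pi = 0$, so $\Delta(u_1 - u_2) = 0$, and a second appeal to Lemma \ref{lem:stationary}(a) shows that $u_1 - u_2$ is constant. There is no real obstacle; the only piece of genuine bookkeeping is choosing the constant $c$ so that the intermediate $w$ is balanced, and everything else follows formally from Theorem \ref{thm:Poisson}.
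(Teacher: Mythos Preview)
Your proof is correct and follows essentially the same route as the paper: reduce $\Delta^2 u = f$ to two successive Poisson equations via the intermediate function $w = \Delta u$, use Theorem~\ref{thm:Poisson} to pin down the constant so that $w$ is balanced, and read off the grounded solution. You even spell out the uniqueness-up-to-constants argument and the grounding at $o$ more explicitly than the paper does.
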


\begin{proof} Set $\Delta u = v$. Then $v$ must be a solution of the Poisson equation
 of Theorem \ref{thm:Poisson}. That is, 
$$
v =  -G\ret_{X\setminus \{o\}}f + c\,,\quad \text{where }\; c \in \C.
$$
Next, $u$ must solve the Poisson equation $\Delta u = v$. Thus, we must have
$\int_X v\,d\pi =0$. This forces $c = \int_X G\ret_{X\setminus \{o\}}f\, d\pi\,$,
so that we get the proposed grounded solution.
\end{proof}

In particular, the bi-harmonic functions on $X$, i.e., the elements of the kernel of $\Delta^2$, are
the constant functions.

We now consider boundary value problems for the bi-Laplacian. Recall from 
Definition \& Remark \ref{rmk:Poisson}  the family of probability measures $\nu_x = \nu_x^{\partial X}$ ($x \in X$) 
on $\partial X$. Given the stationary distribution $\pi$, we define $\nu_{\pi}$ by
$$
\nu_{\pi} = \sum_{x \in X} \pi(x)\,\nu_x = \pi\, G\ret_{X^o}P\ret_{X^o,\,\partial X}\,.
$$
\begin{thm}[Bi-Laplace Neumann problem.] \label{thm:BiNeumann}
Let $f \in L(X^o)$ and $g \in L(\partial X)$. Then the boundary value problem
$$
\Delta^2 u = f \; \text{ on }\; X^o \AND \partial_{\vec n}u = g\; \text{ on }\;\partial X
$$
is solvable if and only if 
\begin{equation}\label{eq:condition}
\int_{X^o} G\ret_{X^o} f\, d\pi + \int_{\partial X} g\, d\nu_{\pi} = 0\,.
\end{equation}
In this case, all solutions are given by
$$
G\ret_{X \setminus \{o\}}G\ret_{X^o}f + G\ret_{X \setminus \{o\}} h + c\,, \quad \text{where} \quad h(x) = \int_{\partial X} g\, d\nu_x \AND c \in \C\,.
$$
\end{thm}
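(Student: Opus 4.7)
The plan is to reduce the bi-Laplace Neumann problem to two successive applications of the theorems already established: first a Dirichlet problem (Theorem \ref{thm:Dirichlet}), then the Poisson equation (Theorem \ref{thm:Poisson}).

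I would set $v = \Delta u$. Recalling that $\partial_{\vec n}u = -\Delta u$ on $\partial X$, the system becomes
$$
\Delta v = f \;\text{ on }\; X^o \AND v = -g \;\text{ on }\; \partial X,
$$
which is precisely the Dirichlet problem of Theorem \ref{thm:Dirichlet} (with boundary data $-g$). Using formula \eqref{eq:soldir} from Definition \& Remark \ref{rmk:Poisson}, and the convention that $G\ret_{X^o}$ is extended by $0$ off $X^o \times X^o$, its unique solution is
$$
v(x) = -G\ret_{X^o}f(x) - h(x), \quad x \in X,
$$
where $h(x) = \int_{\partial X} g\, d\nu_x$ (so $h = g$ on $\partial X$, since $\nu_x = \delta_x$ there).

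Next, $u$ must satisfy $\Delta u = v$. By Theorem \ref{thm:Poisson}, this is solvable iff $\int_X v\,d\pi = 0$, and then the grounded solution is $u = -G\ret_{X\setminus \{o\}}v$, with all solutions differing by a constant. Substituting the formula for $v$ immediately gives the claimed expression
$$
u = G\ret_{X\setminus \{o\}}G\ret_{X^o}f + G\ret_{X\setminus \{o\}}h + c.
$$

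It remains to verify that the solvability condition $\int_X v\,d\pi = 0$ is equivalent to \eqref{eq:condition}. The main (and only genuinely non-routine) computation is $\int_{X^o} h\,d\pi$, which I would handle by interchanging summations:
$$
\int_{X^o} h\,d\pi = \sum_{y \in \partial X} g(y) \sum_{x \in X^o} \pi(x)\nu_x(y).
$$
Since $\nu_x = \delta_x$ for $x \in \partial X$, splitting the definition $\nu_\pi = \sum_{x \in X}\pi(x)\nu_x$ into its contributions from $X^o$ and $\partial X$ yields $\sum_{x \in X^o}\pi(x)\nu_x(y) = \nu_\pi(y) - \pi(y)$ for $y \in \partial X$. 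Hence
$$
\int_{X^o} h\,d\pi = \int_{\partial X} g\,d\nu_\pi - \int_{\partial X} g\,d\pi.
$$
Combining this with $\int_{\partial X} v\,d\pi = -\int_{\partial X} g\,d\pi$ (since $v = -g$ there) gives
$$
\int_X v\,d\pi = -\int_{X^o} G\ret_{X^o}f\,d\pi - \int_{\partial X} g\,d\nu_\pi,
$$
and vanishing of this expression is exactly \eqref{eq:condition}. The subtlest bookkeeping is this last accounting of where the ``boundary mass'' of $\nu_\pi$ comes from; once that is isolated, the rest of the proof is a direct application of the two earlier theorems.
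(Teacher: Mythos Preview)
Your proof is correct and follows exactly the paper's approach: reduce to a Dirichlet problem for $v=\Delta u$, then to a Poisson equation for $u$. The paper actually asserts the equivalence of $\int_X v\,d\pi=0$ with \eqref{eq:condition} without justification, so your added verification is a genuine improvement in detail. One small simplification: you need not split $\int_X h\,d\pi$ into $X^o$ and $\partial X$ pieces, since the definition $\nu_\pi=\sum_{x\in X}\pi(x)\nu_x$ already ranges over all of $X$, giving $\int_X h\,d\pi=\int_{\partial X} g\,d\nu_\pi$ in one line.
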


\begin{proof}
Recall from \eqref{eq:outer} that $\partial_{\vec n}u = - \Delta u$ on $\partial X$.
Hence, if we set $v = \Delta u$ then $v$ must solve the Dirichlet problem
$$
\Delta v = f \;\text{ on }\; X^o \AND v = -g \;\text{ on }\; \partial X\,.
$$
By Theorem \ref{thm:Dirichlet} and Remark \ref{rmk:Poisson}, the unique solution is
$v = -G\ret_{X^o} f - h\,$, where the harmonic function $h \in H(P,X^o)$ 
is as stated.
Since $v = \Delta u$, we must necessarily have $\int_X v\,d\pi = 0$, that is,
\eqref{eq:condition} must hold. In this case, $u$ solves the Poisson equation
$\Delta u = v$, which leads to the proposed solution(s).
\end{proof}

Before passing to the bi-Laplace Dirichlet problem, we need some 
preparations, involving the two matrices
\begin{equation}\label{eq:RS}
R = P\ret_{\partial X,\, X^o} G\ret_{X^o}^{\,\,\,2} P\ret_{X^o,\,\partial X} 
\AND
S = (I\ret_{X^o} - P\ret_{X^o})^2 + P\ret_{X^o,\,\partial X} P\ret_{\partial X,\, X^o}
\end{equation}
over $\partial X$ and $X^o$, respectively. We shall see below that invertibility of these matrices
is important in the context of bi-Laplace boundary value problems.

\begin{lem}\label{lem:invertible} The matrix 
$S$ is invertible  $\iff I_{\partial X} + R$ is invertible.
\end{lem}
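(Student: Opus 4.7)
The plan is to reduce both invertibility statements to applications of the Weinstein--Aronszajn (Sylvester) determinant identity
$$
\dt(I_n + AB) = \dt(I_m + BA)
$$
valid for any $n \times m$ matrix $A$ and $m \times n$ matrix $B$.

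First I would rewrite $S$ using Lemma \ref{lem:restrict}: since $G\ret_{X^o} = (I\ret_{X^o} - P\ret_{X^o})^{-1}$ exists, we have $(I\ret_{X^o} - P\ret_{X^o})^2 = G\ret_{X^o}^{-2}$, so
$$
S = G\ret_{X^o}^{-2} + P\ret_{X^o,\partial X}\,P\ret_{\partial X, X^o}
  = G\ret_{X^o}^{-2}\bigl(I\ret_{X^o} + G\ret_{X^o}^{\,2}\, P\ret_{X^o,\partial X}\,P\ret_{\partial X, X^o}\bigr).
$$
Because $G\ret_{X^o}$ is invertible, $S$ is invertible if and only if the second factor is invertible, i.e.\ if and only if
$$
T := I\ret_{X^o} + \bigl(G\ret_{X^o}^{\,2}\, P\ret_{X^o,\partial X}\bigr)\, P\ret_{\partial X, X^o}
$$
is invertible.

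Now I would apply the Sylvester identity with $A = G\ret_{X^o}^{\,2}\, P\ret_{X^o,\partial X}$ (of size $|X^o|\times|\partial X|$) and $B = P\ret_{\partial X, X^o}$ (of size $|\partial X|\times|X^o|$). This gives
$$
\dt(T) \;=\; \dt\bigl(I\ret_{\partial X} + P\ret_{\partial X, X^o}\,G\ret_{X^o}^{\,2}\,P\ret_{X^o,\partial X}\bigr) \;=\; \dt(I\ret_{\partial X} + R).
$$
Hence $T$ is invertible iff $I\ret_{\partial X} + R$ is invertible, which combined with the previous equivalence proves the claim.

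The only potential subtlety is to make sure the rectangular blocks are set up with consistent conventions (so that the Sylvester identity applies verbatim); everything else is a one-line manipulation with $G\ret_{X^o}$. No difficult step is anticipated.
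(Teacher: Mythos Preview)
Your argument is correct. Factoring $S = G\ret_{X^o}^{-2}\bigl(I\ret_{X^o} + G\ret_{X^o}^{\,2}\, P\ret_{X^o,\partial X}\,P\ret_{\partial X, X^o}\bigr)$ and then invoking the Sylvester determinant identity $\dt(I_n+AB)=\dt(I_m+BA)$ with $A = G\ret_{X^o}^{\,2}\, P\ret_{X^o,\partial X}$ and $B = P\ret_{\partial X, X^o}$ is a clean and complete proof; the dimensions match and $BA=R$ exactly as you say.

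The paper takes a different route. Instead of appealing to Sylvester, it writes down the two intertwining identities
\[
P\ret_{\partial X, X^o}\, G\ret_{X^o}^{\,2}\, S = (I\ret_{\partial X}+R)\,P\ret_{\partial X, X^o}
\qquad\text{and}\qquad
S\, G\ret_{X^o}^{\,2}\, P\ret_{X^o,\partial X} = P\ret_{X^o,\partial X}\,(I\ret_{\partial X}+R),
\]
and from each of them produces, by a Woodbury-type expansion, an explicit algebraic relation showing that one inverse forces the other. Your approach is shorter and more transparent, since the Sylvester identity packages exactly this ``$AB$ versus $BA$'' phenomenon in one line. The paper's argument, on the other hand, is self-contained (it does not quote an outside determinant identity) and exhibits the intertwining relations above, which have some independent interest; but neither these relations nor the explicit form of the inverse obtained along the way are actually reused later in the paper. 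So nothing is lost by your route.
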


\begin{proof}
We have the following two identities.
$$
P\ret_{\partial X,\, X^o} G\ret_{X^o}^{\,\,\,2} S 
= \bigl(I\ret_{\partial X} + R\bigr)P\ret_{\partial X,\, X^o}
\AND S G\ret_{X^o}^{\,\,\,2} P\ret_{X^o,\,\partial X}  
= P\ret_{X^o,\,\partial X}\bigl(I\ret_{\partial X} + R\bigr). 
$$
If $\bigl(I_{\partial X} + R\bigr)$ is invertible then we can multiply the first identity by  
$\bigl(I_{\partial X} + R\bigr)^{-1}$ from the left and expand
$$
G\ret_{X^o}^{\,\,\,2} S = I\ret_{X^o} + 
\Bigl(G\ret_{X^o}^{\,\,\,2}P\ret_{X^o,\,\partial X}\bigl(I\ret_{\partial X} + R\bigr)^{-1}
P\ret_{\partial X,\, X^o}\Bigr) G\ret_{X^o}^{\,\,\,2} S\,,
$$
which implies that $G\ret_{X^o}^{\,\,\,2} S$ and hence also $S$ are invertible.

Analogously, if $S$ is invertible then we can multiply the second identity by $S^{-1}$ from the right
and expand
$$
I\ret_{\partial X} + R = I\ret_{\partial X} + 
\Bigl( P\ret_{\partial X,\, X^o} S^{-1}P\ret_{X^o,\,\partial X}  \Bigr) 
\bigl(I\ret_{\partial X} + R\bigr)\,,
$$
which implies that $I\ret_{\partial X} + R$ is invertible.
\end{proof}

Let 
$\varPi = \diag\bigl(\pi(x)\bigr)_{x \in X}$ and $\varPi_{X^o}$ and $\varPi_{\partial X}$
the respective  restrictions of this diagonal matrix the the interior and the boundary of $X$.
Recall once more from Definition \& Remark \ref{rmk:Poisson} the hitting distributions 
$$
\nu_w(x) = \Prob [ \taub < \infty\,,\; Z_{\taub} = x \mid Z_0 =w ]\,,\quad w \in X^0\,,\; 
x \in \partial X
$$ 
of our Markov chain on the boundary. Finally, also recall from \eqref{eq:Phat} the
reversed transition matrix $\wh P = \varPi^{-1}P^t\,\varPi$. It is irreducible along with $P$,
and there are the associated hitting distributions $\hat \nu_w\,,\; w \in X^o\,$, on $\partial X$.
If $\hat\nu_w(x) > 0$ for some $w \in X^o$, then $x \in \partial_{\text{entrance}}X$, the 
entrance boundary of the original Markov chain of Definition \& Remark \ref{rmk:Poisson}. 

\begin{lem}\label{lem:invert}
Let $\;\varUpsilon= \bigl( \nu_w(x)\bigr)_{w \in X^o,\, x \in \partial X}$ and 
 $\;\wh \varUpsilon= \bigl( \hat \nu_w(x)\bigr)_{w \in X^o,\, x \in \partial X}\,$, two matrices
over $X^o \times \partial X$. Then the matrix $R$ satisfies
$$
\varPi\ret_{\,\partial X}R = \wh \varUpsilon^{\,T} \varPi\ret_{\,X^o} \varUpsilon.
$$
In particular, if $P$ is reversible, i.e. $\wh P = P$, 
so that $\Delta$ is self-adjoint on $L(X,\pi)$, 
then the matrix $I_{\partial X} + R$ is invertible.
\end{lem}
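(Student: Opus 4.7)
The plan is to first reinterpret $\varUpsilon$ and $\wh\varUpsilon$ as matrix products and then use the reversal identity $\wh P = \varPi^{-1} P^T \varPi$ to transport one into the other. Recall from Definition \& Remark \ref{rmk:Poisson} that $\nu_w(x) = \bigl(G\ret_{X^o} P\ret_{X^o,\,\partial X}\bigr)(w,x)$ for $w \in X^o$ and $x \in \partial X$, so that $\varUpsilon = G\ret_{X^o} P\ret_{X^o,\,\partial X}$, and correspondingly $\wh\varUpsilon = \wh G\ret_{X^o} \wh P\ret_{X^o,\,\partial X}\,$, where $\wh G\ret_{X^o} = (I\ret_{X^o} - \wh P\ret_{X^o})^{-1}$.

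Next, I would extract the relevant block identities from $\wh P = \varPi^{-1}P^T\varPi$. Restricting to the block $X^o \times X^o$ gives $\wh P\ret_{X^o} = \varPi\ret_{X^o}^{\,-1}(P\ret_{X^o})^T \varPi\ret_{X^o}$, and therefore $\wh G\ret_{X^o} = \varPi\ret_{X^o}^{\,-1}(G\ret_{X^o})^T \varPi\ret_{X^o}\,$. Restricting to $X^o \times \partial X$ and $\partial X \times X^o$ similarly yields $\varPi\ret_{X^o}\wh P\ret_{X^o,\,\partial X} = (P\ret_{\partial X, X^o})^T \varPi\ret_{\partial X}\,$. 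Substituting these into $\wh\varUpsilon$ gives
$$
\wh\varUpsilon \;=\; \varPi\ret_{X^o}^{\,-1}(G\ret_{X^o})^T \varPi\ret_{X^o} \cdot \varPi\ret_{X^o}^{\,-1}(P\ret_{\partial X, X^o})^T\varPi\ret_{\partial X} \;=\; \varPi\ret_{X^o}^{\,-1}\bigl(P\ret_{\partial X, X^o}G\ret_{X^o}\bigr)^T \varPi\ret_{\partial X}\,,
$$
and transposing,
$$
\wh\varUpsilon^{\,T} \;=\; \varPi\ret_{\partial X}\, P\ret_{\partial X, X^o}\, G\ret_{X^o}\, \varPi\ret_{X^o}^{\,-1}.
$$
Multiplying by $\varPi\ret_{X^o}\varUpsilon = \varPi\ret_{X^o} G\ret_{X^o} P\ret_{X^o,\,\partial X}$, the factor $\varPi\ret_{X^o}^{\,-1}\varPi\ret_{X^o}$ cancels and two copies of $G\ret_{X^o}$ combine, producing $\varPi\ret_{\partial X}\,P\ret_{\partial X, X^o}\,G\ret_{X^o}^{\,\,\,2}\,P\ret_{X^o,\,\partial X} = \varPi\ret_{\partial X}\,R$, which is the claimed identity.

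For the second statement, in the reversible case $\wh P = P$, so $\wh\varUpsilon = \varUpsilon$, and the identity just proved becomes
$$
\varPi\ret_{\partial X}\,R \;=\; \varUpsilon^{T}\,\varPi\ret_{X^o}\,\varUpsilon.
$$
The right-hand side is symmetric and positive semidefinite, because $\varPi\ret_{X^o}$ is a positive-definite diagonal matrix. Consequently
$$
\varPi\ret_{\partial X}\bigl(I\ret_{\partial X} + R\bigr) \;=\; \varPi\ret_{\partial X} + \varUpsilon^T\varPi\ret_{X^o}\varUpsilon
$$
is the sum of a (strictly) positive-definite and a positive-semidefinite symmetric matrix, hence is positive definite, and in particular invertible. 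Since $\varPi\ret_{\partial X}$ is an invertible diagonal matrix, it follows that $I\ret_{\partial X} + R$ is invertible.

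The only delicate point is pure bookkeeping: one has to keep straight that restriction-to-a-block does not commute with transposition in an obvious way, and verify the block version of $\wh P = \varPi^{-1}P^T\varPi$ carefully on $X^o \times X^o$, $X^o \times \partial X$, and $\partial X \times X^o$ separately. Once the three block identities are in hand, the remainder is a short chain of algebraic substitutions, and the reversible case follows from standard symmetric positive-definiteness reasoning without further input.
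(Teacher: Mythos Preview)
Your proof is correct. The derivation of the identity $\varPi\ret_{\partial X}R = \wh\varUpsilon^{\,T}\varPi\ret_{X^o}\varUpsilon$ proceeds exactly as in the paper, via the block restrictions of $\wh P = \varPi^{-1}P^T\varPi$ and the representation $\varUpsilon = G\ret_{X^o}P\ret_{X^o,\partial X}$; the paper records the key intermediate step as $\varPi\ret_{\partial X}P\ret_{\partial X,X^o}G\ret_{X^o} = \wh\varUpsilon^{\,T}\varPi\ret_{X^o}$, which is precisely your transposed expression for $\wh\varUpsilon^{\,T}$.

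For invertibility in the reversible case your argument differs slightly in packaging. The paper rewrites the identity as the inner-product relation $(g_1, Rg_2)_{\pi,\partial X} = (\hat h_1, h_2)_{\pi, X^o}$ and then argues by contradiction that $R$ (self-adjoint on $L(\partial X,\pi)$) cannot have a negative eigenvalue, whence $I\ret_{\partial X}+R$ is invertible. Your route is more direct: observing that $\varPi\ret_{\partial X}(I\ret_{\partial X}+R) = \varPi\ret_{\partial X} + \varUpsilon^T\varPi\ret_{X^o}\varUpsilon$ is the sum of a positive-definite and a positive-semidefinite symmetric matrix bypasses the eigenvalue discussion entirely. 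Both arguments rest on the same fact (positive semidefiniteness of $\varUpsilon^T\varPi\ret_{X^o}\varUpsilon$), but yours is a cleaner matrix-theoretic formulation, while the paper's inner-product version has the advantage of being reused later in Remark~\ref{rmk:inv} and in the Dirichlet-to-Neumann discussion.
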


\begin{proof}
We have  $G\ret_{X^o} P\ret_{X^o,\,\partial X} = \varUpsilon$, see Remark \ref{rmk:Poisson}.
Therefore $R = P\ret_{\partial X,\, X^o} G\ret_{X^o}\varUpsilon$. Furthermore, by the definition
of $\wh P$, we have $\varPi P = \wh P^T \varPi$.  Therefore 
\begin{equation}\label{eq:NT}
\varPi\ret_{\,\partial X} P\ret_{\partial X,\, X^o} G\ret_{\,X^o} 
= \bigl(\wh G\ret_{X^o} \wh P \ret_{\partial X,\, X^o}\bigr)^T \varPi\ret_{\,X^o} 
= \wh \varUpsilon^T \varPi\ret_{\,X^o}\,.
\end{equation}
This proves the formula for $\varPi\ret_{\,\partial X}R$. Now let $g_1\,, g_2 \in L(X)$. Set
$$
 \hat h_1(w) = \int_{\partial X} g_1\,d\hat\nu_w \AND h_2(w) = \int_{\partial X} g_2\,d\nu_w\,, \quad
 w \in \partial X\,.
$$
Then \eqref{eq:NT} implies that
\begin{equation}\label{eq:innprod}
(g_1\,,R  g_2)_{\pi,\partial X} = (\hat h_1\,, h_2)_{\pi, X^o}\,, 
\end{equation}
where the subscripts indicate that the inner products are taken with respect to
the restriction of $\pi$ to $\partial X$, resp. $X^o$.

Now suppose that $P$ is reversible. Then $\hat \nu_w = \nu_w$ for all $w \in X^o$, 
whence $\wh \varUpsilon= \varUpsilon$. 
Assume that $R$ has a real eigenvalue $-\alpha < 0$ with associated 
non-zero eigenfunction $g$. Set $g_1=g_2=g$. Then also $\hat h_1=h_2=:h$. We get
$$
0 >  -\alpha (g, g)_{\pi,\partial X} = (g,R g)_{\pi,\partial X} = (h\,, h)_{\pi, X^o} \ge 0,
$$
a contradiction. Therefore, in the reversible case, 
$\alpha \cdot I_{\partial X} + R$ is invertible for 
every $\alpha > 0$, in particular, for $\alpha = 1$.
\end{proof}

Note that we can factorise 
$$
S = (I\ret_{X^o} - P\ret_{X^o})(I\ret_{X^o} - P\ret_{X^o} + \varUpsilon P\ret_{\partial X,\, X^o}).
$$
Thus, $S$ is invertible if and only if the second of those factors is invertible, and
then we can write
\begin{equation}\label{eq:SKG}
S^{-1} = K\ret_{X^o}G\ret_{X^o}\,, \quad\text{where}\quad 
K\ret_{X^o} = (I\ret_{X^o} - P\ret_{X^o} + \varUpsilon P\ret_{\partial X,\, X^o})^{-1}.
\end{equation}

We mention at this point that so far, we did not find a 
general condition beyond reversibility which guarantees invertibility of 
$S$, resp. $I\ret_{\partial X} + R$.

\begin{thm}[Bi-Laplace Dirichlet problem.] \label{thm:BiDirichlet}
Let $f \in L(X^o)$ and $g \in L(\partial X)$. If the matrix $S$ of \eqref{eq:RS} is  invertible -- in particular, in the reversible case -- the boundary value problem
$$
\Delta^2 u = f \; \text{ on }\; X^o \AND u = g\; \text{ on }\;\partial X
$$
has a unique solution. It is given by
$$
u =  K\ret_{X^o}G\ret_{X^o}(f + Ug) \quad \text{on }\; X^o\,,
$$
where the $X^o \times \partial X$-matrix $U$ is given by
$$
U = (I\ret_{X^o} - P\ret_{X^o})P\ret_{X^o,\,\partial X} 
+ P\ret_{X^o,\,\partial X}(I\ret_{\partial X} - P\ret_{\partial X})\,.
$$
\end{thm}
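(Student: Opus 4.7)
The plan is to reduce the fourth-order equation $\Delta^2 u = f$ on $X^o$ to a single linear system for the unknown vector $u\ret_{X^o}$, in the spirit of the proof of Theorem \ref{thm:BiNeumann} but with the roles of boundary data reversed: the Dirichlet trace of $u$ is now prescribed, while the normal derivative is free. The algebra should produce exactly the matrix $S$ as the coefficient on the left, so that existence and uniqueness will follow directly from the invertibility of $S$ together with the factorisation \eqref{eq:SKG}.

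The first step is to introduce the auxiliary function $v := \Delta u \in L(X)$ (defined on all of $X$, not just on $X^o$) and to read off its block decomposition with respect to $X = X^o \cuplus \partial X$:
\begin{align*}
v\ret_{X^o} &= (P\ret_{X^o} - I\ret_{X^o})u\ret_{X^o} + P\ret_{X^o,\partial X}\,g, \\
v\ret_{\partial X} &= P\ret_{\partial X,X^o}\,u\ret_{X^o} + (P\ret_{\partial X} - I\ret_{\partial X})\,g,
\end{align*}
where $g = u\ret_{\partial X}$. The requirement $\Delta^2 u = f$ on $X^o$ then reads $\Delta v = f$ on $X^o$, i.e.\
$$-(I\ret_{X^o} - P\ret_{X^o})\,v\ret_{X^o} + P\ret_{X^o,\partial X}\,v\ret_{\partial X} = f.$$
Substituting the two block expressions above, the coefficient of $u\ret_{X^o}$ collapses to $(I\ret_{X^o} - P\ret_{X^o})^2 + P\ret_{X^o,\partial X}\,P\ret_{\partial X,X^o} = S$, and the two contributions involving $g$ combine (after moving them to the right-hand side) into $+Ug$. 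The bi-Laplace Dirichlet problem is thereby reduced to the single linear equation $S\,u\ret_{X^o} = f + Ug$.

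Since $S$ is invertible by hypothesis, \eqref{eq:SKG} gives $S^{-1} = K\ret_{X^o}G\ret_{X^o}$, and the unique candidate is $u\ret_{X^o} = K\ret_{X^o}G\ret_{X^o}(f + Ug)$, together with $u\ret_{\partial X} = g$. Sufficiency (that this $u$ indeed solves the original problem) is then verified by running the substitution backward: each step above is reversible, so the equation $S\,u\ret_{X^o} = f + Ug$ is equivalent to $\Delta v = f$ on $X^o$, hence to $\Delta^2 u = f$ on $X^o$. Uniqueness is immediate, since any homogeneous solution (with $f=0$ and $g=0$) must satisfy $S\,u\ret_{X^o} = 0$, forcing $u\ret_{X^o} = 0$.

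The only delicate part is the sign and block bookkeeping in the substitution step: keeping track of the identity $-(I\ret_{X^o} - P\ret_{X^o})(P\ret_{X^o} - I\ret_{X^o}) = (I\ret_{X^o} - P\ret_{X^o})^2$, and then verifying that $-(I\ret_{X^o} - P\ret_{X^o})P\ret_{X^o,\partial X}\,g$ and $P\ret_{X^o,\partial X}(P\ret_{\partial X} - I\ret_{\partial X})g$ combine to exactly $-Ug$ before moving across. Beyond this clerical care, no further analytical input is required: everything rests on the invertibility hypothesis for $S$, the factorisation \eqref{eq:SKG}, and Lemma \ref{lem:restrict}.
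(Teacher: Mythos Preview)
Your proposal is correct and follows essentially the same route as the paper: both arguments compute the $X^o$-block of $\Delta^2 u$ via the $2\times 2$ block decomposition of $\Delta$ (you do it in two steps through $v=\Delta u$, the paper squares the block matrix directly) and arrive at the identical linear system $S\,u\ret_{X^o} = f + Ug$, after which invertibility of $S$ and \eqref{eq:SKG} finish the job.
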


\begin{proof}
We use once more the block decomposition
$$
u = \begin{pmatrix} u_{\,\ret{X^o}}\\ g \end{pmatrix}\,,\quad 
\Delta = \begin{pmatrix} P\ret_{X^o} - I\ret_{X^o}& P\ret_{X^o,\partial X} \\
                         P\ret_{\partial X, X^o}& P\ret_{\partial X}-I\ret_{\partial X}
         \end{pmatrix}
\AND
\Delta^2 = \begin{pmatrix} S & -U \\ -U' & S'
           \end{pmatrix}\,,
$$
where $S' = (I\ret_{\partial X}-P\ret_{\partial X})^2 + P\ret_{\partial X, X^o}P\ret_{X^o,\partial X}$
and $U' = (I\ret_{\partial X}-P\ret_{\partial X})P\ret_{\partial X, X^o} + 
(I\ret_{\partial X}-P\ret_{\partial X})P\ret_{\partial X, X^o}\,$.
Thus, the equation for $u$ on $X^o$ becomes
$$
(\Delta^2 u)\ret_{\,X^o} = S u\ret_{\,X^o} - Ug\,, 
$$
which is equal to $f$ precisely when $u\ret_{\,X^o}$ has the proposed form.
\end{proof}

While in the ``classical'' reversible case, the matrices $S$ and $I\ret_{\partial X} + R$ are
always invertible, so that the above applies, this is not true in general.

\begin{exa}\label{ex:noninv}
Let $X = \{ 0, 1, \dots, 2N-1\}$ and $P$ be given by $p(k,k+1)=1$, where $k+1$ is taken modulo
$2N$. All other transition probabilities are $=0$. Thus, the associated graph is an oriented
circle of length $2N$. Now we take $X^o = \{ 0, 2, 4, \dots, 2N-2\}$ and 
$\partial X = \{ 1, 3, 5,\dots, 2N-1\}$. Then $S$ is the $N \times N$-matrix
$$
S = \begin{pmatrix}
     1 & 1 &        &        & \\
       & 1 & 1      &        & \\
       &   & \ddots & \ddots & \\
       &   &        & 1      & 1 \\
     1 &   &        &        & 1 
    \end{pmatrix},
$$
which is not regular when $N$ is even.
\end{exa}

\begin{rmk}\label{rmk:inv}
Recall from Definition \& Remark \ref{rmk:Poisson}  the space $H(P,X^o)$
of functions which are harmonic on $X^o$.
One can transform  \eqref{eq:innprod} into the following.

\smallskip

The matrices $S$ and equivalently, $I+R$ are \emph{not} invertible
($-1$ is an eigenvalue of $R$)

$\iff\;$ there is $h_2 \in H(P,X^o), \; h_2 \ne \zero,$ such that $(\hat h_1,h_2)_{\pi} =0$
for every $\hat h_1 \in H(\wh P,X^o)$

$\iff\;$ there is $\hat h_1 \in H(\wh P,X^o), \; \hat h_1 \ne \zero,$ such that $(\hat h_1,h_2)_{\pi} =0$ for every $h_2 \in H(P,X^o)$

$\iff\;$ there are non-zero solutions $u$ of the bi-Laplace Dirichlet problem 
$$
\Delta^2 u = 0 \; \text{ on }\; X^o \AND u = 0\; \text{ on }\;\partial X.
$$
One sees once more that $I\ret_{\partial X}+R$ and $S$ are invertible in the reversible case. 
So far, besides Example \eqref{ex:noninv}, we did not find further (non-reversible) examples where they are non-invertible. 
\end{rmk}

\begin{rmk}\label{rmk:not}
We see that in general we cannot expect to get solutions of the problem to 
find $u \in L(X)$ such that
\begin{equation}\label{eq:plate}
\Delta^2 u = f \; \text{ on }\; X^o\,,\quad \partial_{\vec n}u = g_1\; \text{ on }\;\partial X
\AND u = g_2\; \text{ on }\;\partial X
\end{equation}
for arbitrary $f \in L(X^o)$ and $g_1\,, g_2 \in L(\partial X)$. Indeed, first of all,
$f$ and $g=g_1$ have to satisfy the conservation law \eqref{eq:condition}, and after that,
we have only one degree of freedom left for the choice of $g_2$. That is, given $g_1\,$, we can choose 
$g_2(z)$ for precisely one element $z \in \partial X$. Inserting this value into
the solution of the bi-Laplace Neumann problem, we find the constant $c$, after which the
other values of $g_2$ on $\partial X$ are determined.

\smallskip 

The only case where this is completely satisfactory is 
the one where $\partial X = \{z\}$ consists of
one point only.  In this case, $g_1$ and $g_2$ are two \emph{constants,} and $\nu_x(z) = 1$
for each $x \in X$, so that condition \eqref{eq:condition} becomes
$$
\int_{X \setminus \{z\}} G\ret_{X \setminus \{z\}} f\, d\pi + g_1 = 0\,.
$$
If this holds then there is a unique solution to equation \eqref{eq:plate} 
for $X^o=X \setminus \{z\}$.
Since the choice of the root $o$ for the solution of the Poisson equation in Theorem 
\eqref{thm:Poisson} was arbitrary, we may as well use $o=z$ in this case, and 
then
$$
u = G\ret_{X \setminus \{z\}}^{\,\,\,2} f + g_1 \cdot G\ret_{X \setminus \{z\}} \uno + g_2\,,
$$
where $\uno$ is the constant function with value $1$.
\end{rmk}

In spite of what was said in the last remark, it will turn out to 
be of great interest to derive directly which conditions have to be fulfilled by 
$g_1$ and $g_2$ (as well as $f$) so that
\eqref{eq:plate} can be solved. This 
will be enhanced by 
the probabilistic interpretation (which, however, may be skipped by readers 
who prefer to avoid probability).

\begin{dfn}\label{def:bdryproc}
Consider the stopping time $\taub = \inf \{ n \ge 1: Z_n \in \partial X \}$. 
The \emph{boundary Markov chain} is defined by the transition matrix
$$
Q = \bigl(q(x,y)\bigr)_{x,y \in \partial X}\,,\quad q(x,y) 
= \Prob[\taub < \infty\,,\;Z_{\taub}=y \mid Z_0 = x]. 
$$ 
The \emph{boundary Laplacian} is $\Delta\ret_{\,\,\partial X} = Q - I\ret_{\,\partial X}\,$. 
\end{dfn}

This gives rise to the induced Markov chain on $\partial X$, i.e., the original 
Markov chain observed at the successive 
visits to $\partial X$. The following is a well known consequence 
of finiteness of $X$ and irreducibility of $P$,
see e.g. \cite[\S 6.C]{WMarkov}.

\begin{lem}\label{lem:bdryproc}
The stopping time $\taub$ is almost surely finite for any starting point, 
the matrix $Q$ is stochastic and irreducible, and
$$
Q = P\ret_{\partial X} + P\ret_{\partial X,\, X^o} G\ret_{X^o} P\ret_{X^o,\,\partial X}\,. 
$$ 
Furthermore, denoting by $\pi\ret_{\,\partial X}$ the restriction of the stationary distribution
$\pi$ of $P$ to $\partial X$, we have that $\pi\ret_{\,\partial X}$ is stationary for $Q$,
that is, $\pi\ret_{\,\partial X}Q = \pi\ret_{\,\partial X}\,$.
\end{lem}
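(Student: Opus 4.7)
First I would establish almost sure finiteness of $\taub$ together with the explicit formula for $Q$ in a single stroke. For $x,y \in \partial X$, decompose the event $\{\taub = n+1,\; Z_{\taub}=y\}$ according to $n$: the case $n=0$ contributes $p(x,y)$, while for $n \geq 1$ one has $Z_1,\dots,Z_n \in X^o$ and $Z_{n+1}=y$, contributing $\sum_{w,w' \in X^o} p(x,w)\,(P\ret_{X^o}^{\,n-1})(w,w')\,p(w',y)$. Summing over $n\ge 1$ and invoking Lemma \ref{lem:restrict} collapses the Neumann series into the $(x,y)$-entry of $P\ret_{\partial X, X^o}\,G\ret_{X^o}\,P\ret_{X^o,\partial X}$, giving the claimed formula. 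That $Q$ is stochastic (equivalently, $\taub<\infty$ a.s.) then follows from $G\ret_{X^o}(I\ret_{X^o} - P\ret_{X^o})\uno\ret_{X^o} = \uno\ret_{X^o}$, which reduces $\sum_{y\in\partial X} q(x,y)$ to $p(x,\partial X)+p(x,X^o)=1$.

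For irreducibility of $Q$, I would appeal to irreducibility of $P$: given $x,y \in \partial X$, fix a directed path in $X$ from $x$ to $y$. Its successive visits to $\partial X$ partition it into finitely many sub-paths, each beginning and ending in $\partial X$ and having positive $P$-probability; each such sub-path contributes a positive entry of $Q$ (by the formula just established), and chaining them shows $q^{(k)}(x,y) > 0$ for some $k$.

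The stationarity assertion is the substantive point, and it reduces to a short block computation. Partition $\pi = (\pi\ret_{\,\partial X},\pi\ret_{\,X^o})$ relative to $X = \partial X \cuplus X^o$. The identity $\pi P = \pi$ yields the two block equations
$$
\pi\ret_{\,\partial X} P\ret_{\partial X} + \pi\ret_{\,X^o} P\ret_{X^o,\partial X} = \pi\ret_{\,\partial X}
\AND
\pi\ret_{\,\partial X} P\ret_{\partial X, X^o} + \pi\ret_{\,X^o} P\ret_{X^o} = \pi\ret_{\,X^o}.
$$
Rewriting the second as $\pi\ret_{\,\partial X} P\ret_{\partial X,X^o} = \pi\ret_{\,X^o}(I\ret_{X^o} - P\ret_{X^o})$ and multiplying on the right by $G\ret_{X^o}$ produces $\pi\ret_{\,\partial X} P\ret_{\partial X,X^o}\,G\ret_{X^o} = \pi\ret_{\,X^o}$. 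Applying $P\ret_{X^o,\partial X}$ on the right and adding $\pi\ret_{\,\partial X} P\ret_{\partial X}$ yields $\pi\ret_{\,\partial X}\,Q = \pi\ret_{\,\partial X}P\ret_{\partial X} + \pi\ret_{\,X^o}P\ret_{X^o,\partial X} = \pi\ret_{\,\partial X}$ by the first block equation.

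The only real obstacle is to carry out the path decomposition cleanly enough so that the Neumann series $G\ret_{X^o} = \sum_{n\ge 0} P\ret_{X^o}^{\,n}$ is identified in a single step; once that is done, the remaining claims are routine block-matrix manipulations together with one application of the minimum/irreducibility principle.
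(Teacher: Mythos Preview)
Your argument is correct in all parts. The paper does not actually prove this lemma: it declares the first three claims ``well known'' with a reference to \cite[\S 6.C]{WMarkov}, and for the stationarity of $\pi\ret_{\,\partial X}$ it merely remarks that ``the last identity can be obtained by direct matrix operations.'' Your block computation---solving $\pi\ret_{\,\partial X} P\ret_{\partial X,X^o} = \pi\ret_{\,X^o}(I\ret_{X^o}-P\ret_{X^o})$ for $\pi\ret_{\,X^o}$ via $G\ret_{X^o}$ and substituting into the first block equation---is precisely the matrix manipulation the paper has in mind, so on the one substantive point your approach and the paper's coincide.
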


The last identity can be obtained by direct matrix operations.
We insert a small observation concerning the matrix $R$ over $\partial X$ 
of \eqref{eq:RS}. Recall that $G\ret_{X^o} = \bigl(I\ret_{X^o} - P\ret_{X^o}\bigr)^{-1}$, and consider more
generally
$$
G\ret_{X^o}(\lambda) = \bigl(\lambda\cdot I\ret_{X^o} - P\ret_{X^o}\bigr)^{-1}
\AND Q(\lambda) = P\ret_{\partial X} 
+ P\ret_{\partial X,\, X^o} G\ret_{X^o}(\lambda) P\ret_{X^o,\,\partial X}\,.
$$
It is well-defined and analytic in a neighbourhood of $\lambda =1$, and as a consequence
of the well-known resolvent equation 
$G\ret_{X^o}(\lambda_1) - G\ret_{X^o}(\lambda_2) = (\lambda_2-\lambda_1) 
\cdot G\ret_{X^o}(\lambda_1) G\ret_{X^o}(\lambda_2)$, we get
\begin{equation}\label{eq:derive}
\frac{d}{d\lambda}\bigl(\lambda \cdot I\ret_{\partial X} - Q(\lambda)\bigr) =
I\ret_{\partial X} + R(\lambda)\,,\quad \text{where}\quad
R(\lambda) = P\ret_{\partial X,\, X^o} G\ret_{X^o}(\lambda)^2 P\ret_{X^o,\,\partial X}\,.
\end{equation}
We have $Q = Q(1)$ and  $R = R(1)$, so that in the following, we may interpret 
$-(I_{\partial X} + R)$ as the derivative (at $\lambda = 1$) of $\Delta_{\partial X}\,$.

\begin{thm}[Discrete plate equation, Variant 1.]\label{thm:plate}
For  $f \in L(X^o)$ and $g_1\,, g_2 \in L(\partial X)$, the problem \eqref{eq:plate} admits
a solution if and only if
\begin{equation}\label{eq:plate-cond}
\Delta\ret_{\,\,\partial X} g_2 + P\ret_{\partial X,\, X^o} G\ret_{\,X^o}^{\,\,2} f
= -(I\ret_{\,\partial X} + R)g_1\,,
\end{equation}
where $\Delta\ret_{\,\,\partial X} = Q - I\ret_{\,\partial X}$ is the boundary Laplacian,
and $R$ is given by \eqref{eq:RS}.

In this case, the solution is given by 
$$
u = G\ret_{\,X^o}^{\,\,2} f + G\ret_{\,X^o} h_1 + h_2\,, \quad \text{where}\quad
h_i(x) = \int_{\partial X} g_i\,d\nu_x \;\; (x \in X, \; i=1,2).
$$
Equivalently, for any choice of $z \in \partial X$, 
$$
u= G\ret_{X \setminus \{z\}}G\ret_{X^o}f + G\ret_{X \setminus \{z\}} h_1 + g_2(z).
$$
\end{thm}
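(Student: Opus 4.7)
The plan is to reduce \eqref{eq:plate} to two successive applications of the Dirichlet problem (Theorem \ref{thm:Dirichlet}) via the substitution $v := \Delta u$, and then to derive the solvability condition \eqref{eq:plate-cond} by imposing consistency on $\partial X$. If $u$ solves \eqref{eq:plate}, then $v = \Delta u \in L(X)$ satisfies $\Delta v = \Delta^2 u = f$ on $X^o$ and, by the definition \eqref{eq:outer} of the outer normal derivative, $v = -\partial_{\vec n} u = -g_1$ on $\partial X$. Theorem \ref{thm:Dirichlet} then determines $v$ uniquely, yielding $v\ret_{\,X^o} = -G\ret_{X^o} f - h_1\ret_{\,X^o}$ together with $v\ret_{\,\partial X} = -g_1$.

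Next, $u$ itself solves the Dirichlet problem with right-hand side $v\ret_{\,X^o}$ and boundary data $g_2$, so a second application of Theorem \ref{thm:Dirichlet}, together with the identity $G\ret_{X^o} P\ret_{X^o,\partial X}\, g_i = h_i\ret_{\,X^o}$ from Definition \& Remark \ref{rmk:Poisson}, gives
$$
u\ret_{\,X^o} = -G\ret_{X^o}\bigl(v\ret_{\,X^o} - P\ret_{X^o,\,\partial X}\, g_2\bigr) = G\ret_{X^o}^{\,\,2} f + G\ret_{X^o} h_1\ret_{\,X^o} + h_2\ret_{\,X^o},
$$
which, together with $u\ret_{\,\partial X} = g_2 = h_2\ret_{\,\partial X}$, produces the first proposed formula.

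The solvability condition arises from the requirement $\Delta u\ret_{\,\partial X} = v\ret_{\,\partial X} = -g_1$, which has not yet been enforced. For $y \in \partial X$ I expand $\Delta u(y) = P\ret_{\partial X, X^o} u\ret_{\,X^o}(y) + (P\ret_{\partial X} - I\ret_{\partial X})\, g_2(y)$, substitute the explicit form of $u\ret_{\,X^o}$, and recognise $P\ret_{\partial X, X^o} G\ret_{X^o}^{\,\,2} P\ret_{X^o,\,\partial X} = R$ via \eqref{eq:RS} and $P\ret_{\partial X} + P\ret_{\partial X,X^o} G\ret_{X^o} P\ret_{X^o,\partial X} = Q$ via Lemma \ref{lem:bdryproc}. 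The pieces then reassemble as $\Delta u\ret_{\,\partial X} = \Delta\ret_{\,\partial X}\, g_2 + P\ret_{\partial X, X^o} G\ret_{X^o}^{\,\,2} f + R g_1$, and equating this to $-g_1$ gives exactly \eqref{eq:plate-cond}. This bookkeeping is the main obstacle: the clean appearance of the boundary Laplacian $\Delta\ret_{\,\partial X}$ and of $I\ret_{\partial X} + R$ only becomes visible after rearranging these four matrix products, the second identity from Lemma \ref{lem:bdryproc} being the crucial recognition.

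For the equivalent root-$z$ formula, observe that once \eqref{eq:plate-cond} holds, the constructed $u$ satisfies $\Delta u = v$ on all of $X$, so $\int_X v\, d\pi = \int_X \Delta u\, d\pi = 0$ by $\pi P = \pi$. Theorem \ref{thm:Poisson} therefore applies with grounding at any $z \in \partial X$, giving $u = -G\ret_{X\setminus\{z\}}\, v + g_2(z)$, where the additive constant is fixed by $u(z) = g_2(z)$. On $X \setminus \{z\}$ the function $v$ coincides with $-G\ret_{X^o} f - h_1$, using that $h_1 = g_1$ on $\partial X$ and that $G\ret_{X^o} f$ vanishes off $X^o$; substitution produces the second formula.
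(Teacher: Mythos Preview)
Your proof is correct and follows essentially the same approach as the paper: both set $v=\Delta u$, solve two successive Dirichlet problems to obtain the first formula, and then derive \eqref{eq:plate-cond} by enforcing $\Delta u = -g_1$ on $\partial X$ via the identifications of $Q$ (Lemma~\ref{lem:bdryproc}) and $R$. The only cosmetic difference is that you obtain the second formula directly from Theorem~\ref{thm:Poisson} with root $z$, whereas the paper cites Theorem~\ref{thm:BiNeumann}; the underlying argument is the same.
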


\begin{proof}
Since we require $\Delta^2 u = f$ on $X^o$, the function $v = \Delta u$ must be
given on all of $X$.  The function $v$ must solve the Dirichlet problem
$$
\Delta v = f \;\text{ on }\; X^o \AND v = -g_1 \;\text{ on }\; \partial X\,.
$$
By \eqref{eq:soldir}, this yields 
$$
v = -G\ret_{\,X^o} f - h_1\,.
$$
Then $u$ must solve the Dirichlet problem
$$
\Delta u = v{\ret}_{\,X^o} \;\text{ on }\; X^o \AND u = g_2 \;\text{ on }\; \partial X\,,
$$
whence
$$
u = -G\ret_{\,X^o}v{\ret}_{\,X^o} + h_2\,.
$$
Thus, we get the first formula for the proposed solution, but we still need 
to check compatibility.
In $x \in \partial X$ then we must have $\Delta u(x) = v(x) = -g_1(x)$, that is
$$
\begin{aligned}
g_1(x) &= u(x) - \sum_y p(x,y)u(y)\\
&= g_2(x) - \sum_{y \in \partial X} p(x,y)g_2(y) 
- \sum_{w \in X^o} p(x,w)\Bigl( h_2(w) + G\ret_{\,X^o}^{\,\,2} f(w) + G\ret_{\,X^o} h_1(w)\Bigr)
\end{aligned}
$$
Now recall that we consider functions as column vectors, and that 
$h_i = \varUpsilon g_i = G\ret_{X^o} P\ret_{X^o,\,\partial X}g_i$. Thus, the above means that
$$
g_1 = g_2 - P\ret_{\partial X} g_2 - P\ret_{\partial X,X^o}G\ret_{X^o} P\ret_{X^o,\,\partial X}g_2
-P\ret_{\partial X,X^o}G\ret_{\,X^o}^{\,\,2} f 
- P\ret_{\partial X,X^o}G\ret_{X^o}^{\,\,2} P\ret_{X^o,\,\partial X}g_1\,.
$$
Reordering the terms, we obtain that condition \eqref{eq:plate-cond} is necessary for the solution. 
If \eqref{eq:plate-cond} holds then we can read the above arguments backwards and see that
indeed $\Delta u = v$ on the whole of $X$, as required, so that the solution is feasible.

The second formula for the solution now follows from  Theorem \ref{thm:BiNeumann}.
\end{proof}

A discussion of condition \eqref{eq:plate-cond} is now in place.

\begin{imp}[Dirichlet to Neumann for the Bi-Laplacian.]\label{rmk:Dir-Neu-Bi}
We fist observe that the solution of \eqref{eq:plate} is also a solution of
the bi-Laplace Neumann problem, so that $f$ and $g_1$ must satisfy \eqref{eq:condition}.
We may ask where this is ``hidden'' in the condition \eqref{eq:plate-cond}. 
Given $f$ and $g_1\,$, the latter is a Poisson equation for the boundary 
Laplacian for the determination of $g_2\,$. Since the transition matrix $Q$
is irreducible with invariant measure $\pi\ret_{\partial X}$, Theorem \ref{thm:Poisson}
implies that for admitting solution, it is necessary and sufficient that
\begin{equation}\label{eq:condition'}
\int_{\partial X} \Bigl( P\ret_{\partial X,\, X^o} G\ret_{\,X^o}^{\,\,2} f
+ (I\ret_{\,\partial X} + R)g_1\Bigr) \,d\pi = 0.
\end{equation}
With some small effort, this transforms precisely into \eqref{eq:condition}.
Indeed, \eqref{eq:NT} implies that 
$$
\pi(x) P\ret_{\partial X,\, X^o} G\ret_{\,X^o} (x,w) = \hat \nu_w(x)\pi(w) \quad
\text{for}\quad x \in \partial X\,,\; w \in X^o.
$$
Therefore
$$
\int_{\partial X} P\ret_{\partial X,\, X^o} G\ret_{\,X^o}^{\,\,2} f\,d\pi =
\sum_{x \in \partial X\,,\, w \in X^o} \hat \nu_w(x)\,\pi(w)\, G\ret_{\,X^o} f(w)
= \int_{X^o} G\ret_{\,X^o} f\,d\pi\,,
$$
since each $\hat \nu_w$ is a probability distribution on $\partial X$. In the same way,
using \eqref{eq:innprod},
$$ 
\int_{\partial X} (I\ret_{\,\partial X} + R)g_1\,d\pi 
= \int_{\partial X} g_1\,d\pi + (\uno\,,R  \bar g_1)_{\pi,\partial X} 
= \int_{\partial X} g_1\,d\pi+ (\uno\,, \bar h_1)_{\pi, X^o} 
= \int_{\partial X} g_1\,d\nu_{\pi}\,,
$$
since $\nu_x = \delta_x$ for $x \in \partial X$. 

\smallskip

Thus, if $f$ and $g_1$ are given and such that \eqref{eq:condition}, 
resp. \eqref{eq:condition'} hold, then Theorem \ref{thm:Poisson} yields
that for arbitrary $z \in \partial X$, the possible choices for $g_2$ are
$$
g_2 = \Bigl(I\ret_{\partial X \setminus \{z\}} - Q\ret_{\,\partial X \setminus \{z\}}\Bigr)^{-1}
\Bigr( P\ret_{\partial X,\, X^o} G\ret_{\,X^o}^{\,\,2} f
+ (I\ret_{\,\partial X} + R)g_1\Bigr) + c\,,
$$
for any $c \in \C$. 

\smallskip

Conversely, if $f$ and $g_2$ are given, and if the matrix 
$I\ret_{\,\partial X} + R$ is invertible -- in particular, in the reversible case --
the function  $g_1$ is determined uniquely as
\begin{equation}\label{eq:DirNeu}
g_1 =  -(I\ret_{\,\partial X} + R)^{-1}\Bigl(\Delta\ret_{\,\,\partial X} g_2 + P\ret_{\partial X,\, X^o} G\ret_{\,X^o}^{\,\,2} f\Bigr).
\end{equation}
In that case, if $f \equiv 0$, then we have the linear Dirichlet to Neumann map 
$L(\partial X) \to L(\partial X)$, $g_2 \mapsto Tg_1\,$ with the \emph{transfer matrix}
\begin{equation}\label{eq:T}
T =  -(I\ret_{\,\partial X} + R)^{-1}\Delta\ret_{\,\,\partial X} 
= (I\ret_{\,\partial X} + R)^{-1}(I\ret_{\,\partial X} -Q).
\end{equation}
Its kernel consists once more of the constant functions, and its
image is the hyperplane 
$$
\Bigl\{ g_1 \in L(\partial X) : \int_{\partial X} g_1 \, d\nu_{\pi} = 0 \Bigr\}.
$$
In any case, the situation for Dirichlet \& Neumann conditions for the bi-Laplacian in
the discrete setting is quite different from the smooth case as considered, e.g.,
by {\sc Gander and Li}~\cite{GL}. See also the Discussion \ref{imp:bi-Green} below.
\end{imp}

We now propose a second approach to the ``plate equation'' via an at first glance
slight modification of the problem \eqref{eq:plate}. 
We decompose $X^o = Y = Y^o\cuplus \partial Y$ according to 
\eqref{eq:sub}. Thus, $Y^o$ is  the ``second interior'' of $X$. Recall from Remarks \ref{rmk:sub} 
the definition of $\Delta_{[Y]}$ and the second way \eqref{eq:normalY*} of defining
the outer normal derivative.  Furthermore, we require that $Y$ is strongly connected,
so that the matrix $P_{[Y]}$ is irreducible, and we write $Q_{[\partial Y]}$ for the transition
matrix of the boundary process on $\partial Y$ according Definition \ref{def:bdryproc} and Lemma
\ref{lem:bdryproc} (with $Y$ in the place of $X$). There is also the associated
matrix $R_{[\partial Y]}$ corresponding to \eqref{eq:RS} and Lemma \ref{lem:invert}.
With these ingredients, we have the following.

\begin{thm}[Discrete plate equation, Variant 2.]\label{thm:plate'}
Let $f \in L(Y^o)$, $g_1 \in L(\partial Y)$ and $g_2 \in L(\partial X)$. 
If the matrix $I_{\partial Y} + R_{[Y]}$ is invertible - in particular, in the reversible
case - the problem to find $u \in L(X)$ such that 
$$
\Delta_{[Y]}^2 u = f \; \text{ on }\; Y^o\,,\quad \partial_{\vec n}^*u = g_1\; \text{ on }
\;\partial Y \AND u = g_2\; \text{ on }\;\partial X
$$
has a unique solution. On $Y$, it is the is the solution of the bi-Laplace Dirichlet problem
$$
\begin{aligned}
\Delta_{[Y]}^2 u &= f \; \text{ on }\; Y^o\,,\quad u = g\; \text{ on } \;\partial Y\,,
\quad \text{where}\\ 
g(y) &= \frac{1}{p(y,\partial X)}\sum_{z \in \partial X} p(y,z)g_2(z) - g_1(y)\,,\quad y \in Y\,,
\end{aligned}
$$
according to Theorem \ref{thm:BiDirichlet}.  
\end{thm}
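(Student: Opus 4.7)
The plan is to recognize that this variant is really a disguised bi-Laplace Dirichlet problem on the sub-network $Y$, where the two boundary conditions $\partial_{\vec n}^* u = g_1$ on $\partial Y$ and $u = g_2$ on $\partial X$ combine into a single Dirichlet condition $u = g$ on $\partial Y$. The rest is then a direct invocation of Theorem \ref{thm:BiDirichlet} applied to the sub-network $(Y, \partial Y)$ with its intrinsic Laplacian $\Delta_{[Y]}$.

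First I would unwind the definition \eqref{eq:normalY*} of $\partial_{\vec n}^*$ at a point $y \in \partial Y$. Since in the present setting $X \setminus Y = \partial X$ and any admissible $u$ satisfies $u(z) = g_2(z)$ for $z \in \partial X$, the condition $\partial_{\vec n}^* u(y) = g_1(y)$ becomes
$$
g_1(y) = \frac{1}{p(y,\partial X)}\sum_{z \in \partial X} p(y,z)\bigl(g_2(z) - u(y)\bigr),
$$
which, after the trivial simplification $\sum_{z \in \partial X} p(y,z)/p(y,\partial X) = 1$, forces
$$
u(y) = \frac{1}{p(y,\partial X)}\sum_{z \in \partial X} p(y,z)\,g_2(z) - g_1(y) = g(y).
$$
Note that $p(y,\partial X) > 0$ for $y \in \partial Y$ by the very definition \eqref{eq:sub} of $\partial Y$, so the expression is well-defined.

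Thus the conditions on $\partial Y$ and $\partial X$ prescribe $u$ entirely outside of $Y^o$: namely $u = g$ on $\partial Y$ and $u = g_2$ on $\partial X$. Restricted to $Y$, the problem becomes: find $u \in L(Y)$ with $\Delta_{[Y]}^2 u = f$ on $Y^o$ and $u = g$ on $\partial Y$. Since $\Delta_{[Y]}$ only accesses values of $u$ on $Y$ (recall from Remarks \ref{rmk:sub} that $P_{[Y]}$ is a stochastic matrix on $Y$), this is precisely the bi-Laplace Dirichlet problem on the network $Y$ with boundary $\partial Y$, of the type treated in Theorem \ref{thm:BiDirichlet}.

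The hypothesis that $Y$ is strongly connected ensures that $P_{[Y]}$ is irreducible, so that Theorem \ref{thm:BiDirichlet} is applicable. By Lemma \ref{lem:invertible} (applied to $Y$ with the matrices $R_{[Y]}$ and $S_{[Y]}$), the standing assumption that $I_{\partial Y} + R_{[Y]}$ is invertible is equivalent to invertibility of the relevant $S_{[Y]}$, and the reversible case is covered by Lemma \ref{lem:invert}. Theorem \ref{thm:BiDirichlet} then delivers a unique $u$ on $Y^o$, completed to all of $X$ by the prescribed values $g$ on $\partial Y$ and $g_2$ on $\partial X$. Uniqueness is immediate, since any two solutions agree on $\partial X$ (both equal $g_2$), agree on $\partial Y$ (both equal the forced value $g$ derived above), and agree on $Y^o$ by the uniqueness part of Theorem \ref{thm:BiDirichlet}. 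I do not foresee a genuine obstacle here; the only point requiring care is the bookkeeping to confirm that, conversely, the $u$ so produced does satisfy the original equation $\partial_{\vec n}^* u = g_1$ on $\partial Y$, which is immediate by reversing the algebraic manipulation in the first step.
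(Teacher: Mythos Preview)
Your proposal is correct and follows essentially the same approach as the paper: both arguments unwind the definition of $\partial_{\vec n}^*$ at $y \in \partial Y$, substitute $u=g_2$ on $X\setminus Y=\partial X$, and solve for $u(y)$ to obtain the forced Dirichlet value $g(y)$, after which the problem on $Y$ is the bi-Laplace Dirichlet problem of Theorem~\ref{thm:BiDirichlet}. Your write-up is in fact more detailed than the paper's, making explicit the well-definedness ($p(y,\partial X)>0$), the role of the invertibility hypothesis via Lemmas~\ref{lem:invertible} and~\ref{lem:invert}, and the uniqueness argument.
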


\begin{proof} 
By \eqref{eq:normalY*} we must have
$$
\partial_{\vec n}^* u(y) = \frac{1}{p(y,\partial X)} \sum_{z \in X \setminus Y}
p(y,z)g_2(z) - u(y)\,, \quad y \in \partial Y.
$$
This yields the values of $u =g$ on $\partial Y$. 
\end{proof}

 We explicitly propose these two variants, which show that the discrete analogue of
typical ``smooth'' equations may be subject to different interpretations; see the discussion
below.

We conclude this Section with another, simple variant of the Dirichlet problem for the 
bi-Laplacian $\Delta^2$ on $X$.

\begin{thm}[Iterated Dirichlet problem.] \label{thm:IterDirichlet}
With $X^o = Y = Y^o\cuplus \partial Y$,  
let $f \in L(Y^o)$, $g_1 \in L(\partial Y)$ and $g_2 \in L(\partial X)$. Then the boundary value problem
$$
\Delta^2 u = f \; \text{ on }\; Y^o\,,\quad \Delta f(y) = g_1 \; \text{ on }\; \partial Y 
\AND u = g_2\; \text{ on }\;\partial X
$$
has a unique solution, which is given by 
$$
\begin{gathered}
u = G\ret_{X^o}G\ret_{\,Y^o}f \,-\, G\ret_{X^o}h_1 \,+\, h_2\,,\quad \text{where}\quad
\\ 
h_1(y) = \int_{\partial Y} g_1\, d\nu_y^{\partial Y} \quad (y \in Y)
\AND h_2(x) = \int_{\partial X} g_2\, d\nu_x^{\partial X} \quad (x \in X).
\end{gathered}
$$
\end{thm}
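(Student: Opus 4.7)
The natural strategy is to iterate the Dirichlet solution of Theorem \ref{thm:Dirichlet} by introducing the auxiliary function $v = \Delta u$. The three pieces of data then disassemble into two ordinary Dirichlet problems stacked on top of each other: on $Y^o$ we have $\Delta v = \Delta^2 u = f$, while on $\partial Y$ the condition $\Delta u = g_1$ reads simply $v = g_1$. Since $X^o = Y$, the function $v$ naturally lives on $Y$, and the problem for $v$ is a Dirichlet problem on the subnetwork $Y$ with interior $Y^o$ and boundary $\partial Y$. The one small thing to check is that this fits the setting of \S \ref{sec:global}: by the definition \eqref{eq:sub} of $\partial Y$, we have $p(y,z)=0$ for every $y \in Y^o$ and every $z \notin Y$, so $\Delta v(y) = Pv(y) - v(y)$ depends only on values of $v$ on $Y$. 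In particular the restriction of $P$ to $Y^o$ used in Lemma \ref{lem:restrict} agrees with the one arising from the subnetwork structure, and $G\ret_{Y^o}$ is well-defined.

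Applying Theorem \ref{thm:Dirichlet} together with formula \eqref{eq:soldir} of Definition \& Remark \ref{rmk:Poisson} to this first step produces the unique function
$$
v \;=\; -G\ret_{Y^o} f + h_1 \quad\text{on}\quad Y.
$$
Now $v$ is a known function on $X^o$, and we are left with the outer Dirichlet problem $\Delta u = v$ on $X^o$ with $u = g_2$ on $\partial X$. A second application of Theorem \ref{thm:Dirichlet} and \eqref{eq:soldir} yields
$$
u \;=\; -G\ret_{X^o} v + h_2
   \;=\; -G\ret_{X^o}\bigl(-G\ret_{Y^o} f + h_1\bigr) + h_2
   \;=\; G\ret_{X^o}G\ret_{Y^o} f - G\ret_{X^o} h_1 + h_2,
$$
which is exactly the expression claimed in the statement.

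Uniqueness follows at no extra cost from uniqueness at each step: if $u_1$ and $u_2$ were two solutions of the full problem, then $v_i = \Delta u_i$ would satisfy the same Dirichlet problem on $Y$, hence $v_1 = v_2$ by Theorem \ref{thm:Dirichlet}; consequently $u_1$ and $u_2$ would satisfy the same Dirichlet problem on $X$, and Theorem \ref{thm:Dirichlet} again forces $u_1 = u_2$. I do not foresee any real obstacle; the only point worth highlighting is that the middle condition $\Delta u = g_1$ on $\partial Y$ is precisely what is needed to promote $v$ from being constrained only on $Y^o$ (via $\Delta v = f$) to being a fully prescribed function on all of $Y = X^o$, which is exactly what allows the second Dirichlet step to be applied.
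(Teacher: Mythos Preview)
Your proof is correct and follows exactly the same two-step reduction as the paper: set $v=\Delta u$, solve the inner Dirichlet problem on $Y$ to get $v=-G\ret_{Y^o}f+h_1$, then solve the outer Dirichlet problem on $X$ to get $u=-G\ret_{X^o}v+h_2$. Your added remarks on why $G\ret_{Y^o}$ is well-defined via \eqref{eq:sub} and on the iterated uniqueness argument are a welcome bit of extra care that the paper leaves implicit.
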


\begin{proof} Define $v = \Delta u$ on $X^o$. It must solve the Dirichlet problem
$$
\Delta v = f\; \text{ on }\; Y^o \AND v = g_1 \; \text{ on }\; \partial Y\,,
$$
whence by Theorem \ref{thm:Dirichlet},
$v = -G\ret_{\,Y^o}f + h_1\,$. Next, $u$ must solve the  Dirichlet problem
$$
\Delta u = v\; \text{ on }\; X^o \AND v = g_2 \; \text{ on }\; \partial X\,.
$$
The solution is 
$u = -G\ret_{X^o}v + h_2\,$.
\end{proof} 

It is clear that one can iterate further, taking $\Delta^n$ and the $n^{\textrm{th}}$
interior of $X$ on which $f$ is defined (as long as that interior is non-empty), 
as well as the ``onion layers'' of successive boundaries on which the respective boundary 
functions $g_1\,,\dots, g_n$ are specified.

\begin{imp}[Discussion: bi-harmonic Green kernel.]\label{imp:bi-Green}
The \emph{bi-harmonic Green kernel} should be the respective kernel which provides
the solution of the problems considered in the last three theorems,
where all boundary values are set to $0$, and one is given only the function
$f$ defined in the interior. 
In the smooth situation, one of the interesting problems concerns the 
negative part of that kernel corresponding to the plate equation; see \cite{GGS}. 

From our theorems, we see that the choice of that kernel is
case-dependent. 

Let us start with the last one, from Theorem \ref{thm:IterDirichlet}.
The kernel is $\;G\ret_{X^o}G\ret_{\,Y^o}\,$, which is non-negative and
$> 0$ on $X^o \times Y^o$ (as long as $X^o$ is strongly connected).
This corresponds to the bi-harmonic Green kernel of \cite{Ya}. It appears 
not to have a natural counterpart in the classical smooth setting. 

\smallskip

Regarding the two variants concerning the plate equation, we already saw that
the solution of Theorem \ref{thm:plate} it more restrictive. It may 
be natural to consider only the ``first'' boundary. On the other hand, $\Delta$ is not
an infinitesimal operator, and in $\Delta(\Delta u)$, already the first application
of $\Delta$ reaches out to the boundary and involves the boundary values directly.
If we set $g_1 = g_2 = 0$ in \eqref{eq:plate}, then by \eqref{eq:plate-cond}, 
we only get a solution if $f \in L(X^o)$ satisfies
$$
P\ret_{\partial X,\, X^o} G\ret_{\,X^o}^{\,\,2} f = 0.
$$
The associated kernel is then $G\ret_{\,X^o}^{\,\,2}\,$, which is positive,
while it is the function $f$ whose real as well as complex parts (unless they vanish)
must have positive as well as negative values.

\smallskip

In the second variant,  the one of Theorem \ref{thm:plate'}, we have $f \in L(Y^o)$, and 
with the respective boundary values set to zero, the solution is the one of the
bi-Laplace Dirichlet problem $\Delta_{[Y]}^2 u = f$ with $u=0$ on $\partial Y$.
We may equivalently replace this by the bi-Laplace Dirichlet problem on $X$,
$$
\Delta^2 u = f\;\text{ on }\;X^o\,,\;\text{where }\; f \in L(X^o)\,, \AND u=0 \; 
\text{ on }\; \partial X.
$$
We suppose that the matrices  $I_{\partial X} + R$ and $S$ are invertible; see Lemma \ref{lem:invert}.
From Theorem \ref{thm:BiDirichlet}, we get with $K\ret_{\,X^o}$ given by \eqref{eq:SKG}
$$
u = K\ret_{\,X^o}G\ret_{\,X^o}f\,.
$$
Thus, $K\ret_{\,X^o}G\ret_{\,X^o}$ is our bi-harmonic Green kernel for the plate equation in variant 2
(when rewritten in terms of $\Delta_{[Y]}$ and $Y^o$ instead of $\Delta_{[X]}$ 
and $X^o$). In general, it is not positive everywhere on $X^o \times X^o$.
It appears to be a reasonable analogue of the kernel for the smooth plate equation
as in \cite{GGS}.
\end{imp}

\section{Examples}\label{sec:ex}

\noindent
\textbf{A. Simple random walk on an integer interval} \\
In our first example, $X = \{ 0\,,1\,,\dots,N\}$,
the symmetric edges are between successive integers, and we start with symmetric 
edge weights $a(k,k\pm 1)=1$.
The associated Markov chain is reversible with $p(k,k\pm 1) = 1/2$ for $k=1,\dots, N-1$
and $p(0,1)=p(N,N-1)=1$, while all other transition probabilities are $0$.
The stationary probability measure is 
\begin{equation}\label{eq:piSRW}
\pi(k) = \frac{1}{N} \;\text{ for }\; k=1,\dots, N-1, \AND \pi(0)=\pi(N) = \frac{1}{2N}. 
\end{equation}
We set $o= 0$ and $\partial X = \{0,N\}$. Then it is quite easy to compute
the Green kernels $G_{X \setminus \{0\}}$ and $G_{X^o}\,$.
We can use for example the computations of \cite[\S 5.A]{WMarkov}. (Be careful when using Lemma 5.5
of that reference: the $R_k$ there is the $R_{k-1}$ of the subsequent page 119). 

Our $G_{X \setminus \{0\}}$ corresponds to the case when state $0$ is absorbing and state
$N$ is reflecting, and one computes 
$F_{X \setminus \{0\}}(k,m)$, the probability to reach state $m$ when starting at $k\,$:
$$
F_{X \setminus \{0\}}(k,m) = \frac{k}{m} \;\text{ for }\; k\le m, \AND  
F_{X \setminus \{0\}}(k,m) = 1 \;\text{ for }\; k\ge m.
$$
Using the equation of \cite[Thm. 1.38]{WMarkov}, we 
compute for $m=1,\dots, N-1$
$$
\begin{aligned}
G_{X \setminus \{0\}}(m,m) 
&= \frac{1}{1 - \frac12 F_{X \setminus \{0\}}(m-1,m) - \frac12 F_{X \setminus \{0\}}(m+1,m)} 
= \frac{2}{m}\,, \AND\\
G_{X \setminus \{0\}}(N,N) 
&= \frac{1}{1 -  F_{X \setminus \{0\}}(N-1,N)} = \frac{1}{N}\,. 
\end{aligned}
$$
Now using that 
$G_{X \setminus \{0\}}(k,m) = F_{X \setminus \{0\}}(k,m) G_{X \setminus \{0\}}(m,m)$,
we get
\begin{equation}\label{eq:Go}
G_{X \setminus \{0\}}(k,m) = 2\min\{k,m\}   \;\text{ for }\; m < N, \AND
G_{X \setminus \{0\}}(k,N) =k\,. 
\end{equation}
Next, $G_{X^o}$ corresponds to the case when both states $0$ and $N$ are absorbing.
Then, by the same methods, for $k, m \in \{1,\dots, N-1\}$,
$$
F_{X^o}(k,m) = \frac{k}{m} \;\text{ for }\; k\le m, \AND  
F_{X^o}(k,m) = \frac{N-k}{N-m} \;\text{ for }\; k\ge m.
$$
We get 
\begin{equation}\label{eq:G^o}
 G_{X^o}(k,m) = G_{X^o}(m,k) = \frac{2k(N-m)}{N} \;\text{ for }\; 1 \le k\le m \le N-1.
\end{equation}

\noindent\underline{Poisson equation.} Let $f \in L(X)$ with $f(0) + 2\bigl(f(1)+\dots+f(N-1)\bigr)+f(N)=0$.
Then the solution of the Poisson equation grounded at $0$ is 
\begin{equation}\label{eq:uK}
u_{\text{Kirch}}(k) = -G_{X \setminus \{0\}}f(k)=  -2 \sum_{m=1}^{k-1} mf(m) - 2k \sum_{m=k}^{N-1} f(m) - kf(N)\,,
\end{equation}
$k \in \{1,\dots, N\}$.

We skip the Neumann problem, which is equivalent with the Poisson equation.

\smallskip

\noindent\underline{Dirichlet problem.} Let $f \in L(X^o) = L(\{1,\dots, N-1\})$ and 
$g \in L(\partial X) = L(\{0, N\})$. We first find the distributions on the boundary,
using that $\nu_k(N)= F_{X \setminus \{0\}}(k,N)\,$:
\begin{equation}\label{eq:nuk}
\nu_k(0) = \frac{N-k}{N} \AND \nu_k(N) = \frac{k}{N},
\end{equation}
whence
$$
h(k) = \int_{\partial X} g\,d\nu_k = \tfrac{N-k}{N}g(0) + \tfrac{k}{N}g(N)\,.
$$
Then the solution of the Dirichlet problem is
$$
u_{\text{Dir}}(k) = h(k)-G_{X^o}f(k) = h(k) - \frac{2(N-k)}{N}\sum_{m=1}^{k-1}mf(m)   
- \frac{2k}{N}\sum_{m=k}^{N-1}(N-m)f(m).
$$

\noindent\underline{Mixed problem.} The simplest mixed problem is when in addition to 
$f \in L(X^o)$ we require that the solution satisfies 
$$
u(0) = g(0) \AND \partial_{\vec n}u(N) = g(N)\,, \quad\text{where}\quad  g(0), g(N) \in \C\,. 
$$
In this case, we extend $f$ to $\{ 1,\dots,N\}$ by setting $f(N) = - g(N)$.
Then the solution is $u(k) = g(0) + u_{\text{Kirch}}(k)$, where the latter is
given by \eqref{eq:uK}.
 
\smallskip

\noindent\underline{Poisson and Dirichlet problem with potential.} We only consider the 
easiest case,  when the potential $v$ is constant. We set $\lambda = 1+ v$,
so that our assumption is $|\lambda|> 1$. Then $\wt P = \frac{1}{\lambda} P$ on $X$.
Computing $\wt G = \sum_{n=0}^{\infty} \frac{1}{\lambda^n} P^n\,$ amounts to invert
a tri-diagonal matrix. There are various ways. We used once more  
\cite[5.A]{WMarkov}. 
Let $Q_k(\lambda)$ and $R_k(\lambda)$ be the $k^{\text{th}}$ Chebyshev polynomials 
of the first and second kind, respectively, that is,
$$
Q_k(\cos \varphi) = \cos k\varphi \AND R_k(\cos \varphi) = \frac{\sin (k+1)\varphi}{\sin \varphi}.
$$
It will be convenient to set $R_{-1}(\lambda)=0$. 
After some manipulations, 
setting $\epsilon_m = 2$ for $m \in \{1,\dots,N-1\}$ and $\epsilon_0=\epsilon_N = 1$, we get 
for $k,m \in X$
$$
\wt G(k,m) = \begin{cases} 
 \dps
\epsilon_m \frac{\lambda}{\lambda^2-1}\frac{Q_k(\lambda)Q_{N-m}(\lambda)}{R_{N-1}(\lambda)}\,,&\text{if }\; k \le m\,,\\[11pt]
 \dps
\epsilon_m \frac{\lambda}{\lambda^2-1}\frac{Q_m(\lambda)Q_{N-k}(\lambda)}{R_{N-1}(\lambda)}\,,&\text{if }\; k \ge m\,.
\end{cases}
$$
Then, for $f \in L(\{0,\dots, N\})$, the unique solution $u$ of the Poisson
problem with constant potential $\Delta u - (\lambda-1)u = f$ is
$$
u_{\text{Kirch}}(k) = -\sum_{k=0}^N \wt G(k,m)\frac{f(m)}{\lambda}.
$$
Next, with $X^o = \{ 1, \dots, N-1\}$, we can also compute $\wt G_{X^o}(k,m)$ for $k,m \in X^o$
as follows.
$$
\wt G_{X^o}(k,m) = \begin{cases} 
 \dps
2\lambda\frac{R_{k-1}(\lambda)R_{N-m-1}(\lambda)}{R_{N-1}(\lambda)}\,,&\text{if }\; k \le m\,,\\[11pt]
 \dps
2\lambda\frac{R_{m-1}(\lambda)R_{N-k-1}(\lambda)}{R_{N-1}(\lambda)}\,,&\text{if }\; k \ge m\,.
\end{cases}
$$
Next we compute the measures $\tilde \nu_k$. (In  \cite[5.A]{WMarkov}, $\tilde \nu_k(0)$
corresponds to the quantity $F(k,0|z)$ in the middle of p. 119, with $z=1/\lambda$.)
$$
\tilde \nu_k(0) = \frac{R_{N-k-1}(\lambda)}{R_{N-1}(\lambda)} \AND 
\tilde \nu_k(N) = \frac{R_{k-1}(\lambda)}{R_{N-1}(\lambda)}.
$$
These are in general not probability measures on $\{ 0,N\}$, and indeed not even
necessarily positive, unless $\lambda > 1$ is real.
Given $f \in L(\{1,\dots,N-1\})$ and $g \in L(\{0,N\})$,
the unique solution $u$ of the Dirichlet
problem with constant potential $\Delta u - (\lambda-1)u = f$ on $X^o$, $u=g$ on $\partial X$
is now
$$
u_{\text{Dir}}(k) = g(0) \tilde \nu_k(0) + g(N) \tilde \nu_k(N) -
\sum_{m=1}^{N-1} \wt G_{X^o}(k,m)\frac{f(m)}{\lambda}.
$$

\smallskip 

\noindent\underline{Robin problem.} The simplest case is $\alpha = \beta \ne 0$, constant on 
$\partial X = \{ 0, N\}$. Referring to Corollary \ref{cor:Robin}, $B=\emptyset$ and 
$X^o_{\beta} = X$ with $\tilde p(k,k\pm 1) = 1/2$ whenever $k, k \pm 1 \in \{0,\dots, N\}$.

Computing the associated Green kernel $\wt G_{\text{Rob}} = (I - \wt P)^{-1}$ is analogous to 
computing $G_{X^o}$ in \eqref{eq:G^o}. Indeed, instead of adding one ``tomb'' state,
consider the extended space $\wt X = \{-1, 0, \dots, N, N+1 \}$ with the simple random walk as
before and the new boundary $\{-1, N+1\}$ and interior $\wt X^o = X$.  Thus, when we shift
the elements by $1$ and replace $N$ with $N+2$, we are back to the computation of  
$G_{\wt X^o}$. Hence, for $k, m \in X = \{0,\dots, N\}$,
$$
G_{\text{Rob}}(k,m) = \frac{2(k+1)(N+1-m)}{N+2} \;\text{ for }\; 
0 \le k\le m \le N.
$$
Therefore, given $f \in L(\{1,\dots, N-1\})$ and $g \in  L(\{0, N\})$, 
the unique $u= u_{\text{Rob}} \in L(\{0,\dots, N\})$ such that $\Delta u = f$ on 
$\{1,\dots, N-1\}$ and $\alpha\cdot (u +  \partial_{\vec n}u) = g$ on $\{0, N\}$ is
$$
u_{\text{Rob}}(k) = G_{\text{Rob}}(k,0)\frac{g(0)}{2\alpha} 
+ G_{\text{Rob}}(k,N)\frac{g(N)}{2\alpha} - \sum_{m=1}^{N-1} G_{\text{Rob}}(k,m)
f(m)\,.
$$

\smallskip 

\noindent
\underline{The bi-Laplacian.} We have computed
the Green kernels $G_{X \setminus \{0\}}$ and $G_{X^o}$ in \eqref{eq:Go} and \eqref{eq:G^o},
as well as the hitting distributions $\nu_k$ in \eqref{eq:nuk}.
On this basis, the solutions of the iterated Poisson equation and the bi-Laplace 
Neumann problem can be written down immediately.
So we next compute the transition matrix $Q$ of the boundary chain of Definition 
\ref{def:bdryproc} and Lemma \ref{lem:bdryproc}, and the matrix $R$
of \eqref{eq:RS}.

We have $q(0,0) = p(0,1)\nu_1(0) = \nu_1(0) = \frac{N-1}{N} = q(N,N)$,
and we get 
$$
Q = \frac{1}{N}\begin{pmatrix} N-1 & 1 \\ 1 & N-1    \end{pmatrix}.
$$
With $\pi$ given by \eqref{eq:piSRW}, we have for $i, j \in \{0,N\}$
$$
r(i,j) = 2\sum_{k=1}^{N-1} \nu_k(i)\nu_k(j). 
$$
We get 
$$
R = \frac{N-1}{3N}\begin{pmatrix} 2N-1 & N+1 \\ N+1 & 2N-1 \end{pmatrix} 
\AND  
(I+R)^{-1} = \frac{1}{N^3 + 2N}\begin{pmatrix} 2N^2+1 & 1-N^2 \\ 1-N^2 & 2N^2+1 \end{pmatrix} . 
$$
We now want to consider the bi-Laplace Dirichlet problem with $f \equiv 0$ on $X^o$ and
boundary function $g=g_2 \in L(\{0,N\})$. Since the boundary has only 2 elements, we prefer to use the transfer
matrix for the bi-Laplace Dirichlet to Neumann map according to \eqref{eq:T}, which is
$$
T = \frac{3}{N^2 + 2}\begin{pmatrix} 1 & -1 \\ -1 & 1 \end{pmatrix}.
$$
In other words, if the Dirichlet boundary values are $g(0)$ and $g(N)$,
then the solution must have the Neumann boundary values 
$g_1(0) = -g_1(N)=  3\bigl((g(0)-g(N)\bigr)\big/\bigl(N^2 + 2\bigr)$.
We get 
$$
\begin{aligned}
h_1(k) &= \int_{\partial X} g_1\,d\nu_k = \frac{3N-6k}{N^3 + 2N}\bigl(g(0)-g(N)\bigr)
\AND \\  h_2(k) &= \int_{\partial X} g\,d\nu_k = \frac{N-k}{N}g(0) +  \frac{k}{N}g(N).
\end{aligned}
$$
The unique solution $u$ of the bi-Laplace Dirichlet problem
$$
\Delta u = 0 \; \text{ on }\; \{1,\dots, N\}\,,\quad u(0) = g(0)\,,\; u(N) = g(N)
$$
is now given via \eqref{eq:G^o} as
$$
u(k) = h_2(k) +  \frac{2(N-k)}{N}\sum_{m=1}^{k-1}m\,h_1(m)   
- \frac{2k}{N}\sum_{m=k}^{N-1}(N-m)h_1(m).
$$
The other bi-Laplace equations are obtained along the same lines:
The plate equation of Theorem \ref{thm:plate'} (with $f \equiv 0$)
is a variant of what we have just
computed, replacing $\{0,\dots, N\}$ with $Y = \{1, \dots, N-1\}$
and $\partial Y = \{ 1, N-1\}$.
The iterated Dirichlet problem of Theorem \ref{thm:IterDirichlet} 
means that one has to apply the Green kernel $G_{X^o}$ computed in
\eqref{eq:G^o} as well as the Green kernel $G_{Y^o}$, which is computed
in the same way (shifting down by $1$ and replacing $N$ by $N-2$). 

\bigskip

\noindent
\textbf{B. A non-reversible example.}\\ We set $X = \{ 1, \dots, N\}$, where $N \ge 3$, 
and choose probabilities $p_1\,,\dots, p_N > 0$ with sum~$1$.
The transition probabilities are then
$$
p(1,k) = p_k\;\text{ for }\; k \in X, \AND p(k,k-1)=1\;  \text{ for }\; k\ge 2.
$$
As the boundary, we choose $\partial X = \{N-1, N\}$, and as the root, we choose 
$o = 1$. The stationary probability distribution is 
\begin{equation}\label{eq:pi-nonr}
\pi(k) = \sum_{m=k}^N p_m \Big/ \sum_{m=1}^N mp_m\,.
\end{equation}

\noindent\underline{Poisson equation.} It is easy to compute $G_{X \setminus \{1\}}$:
\begin{equation}\label{eq:GX-1}
G_{X \setminus \{1\}}(k,m) = 1 \;  \text{ for }\; 2 \le m \le k\,,
\AND G_{X \setminus \{1\}}(k,m) = 0 \;  \text{ otherwise}.
\end{equation}
Therefore, given $f \in L(X)$ with $\int_X f\,d\pi = 0$, the unique solution of
the Poisson equation grounded at $1$ is
$$
u(k) = \sum_{m=2}^k f(m).
$$
For the remaining issues,
we compute the Green kernel $G(k,m|z) = \sum_{n=0}^{\infty} p^{(n)}(k,m)z^{n}$, following the
methods of \cite{WMarkov}. The computations are also valid when $p_1 + \dots + p_N < 1$.
First, consider $F(k,m|z) = G(k,m|z)/G(m,m|z)$, the generating function of the first
hitting probability at $m$, when the Markov chain starts at $k$.
We have 
$$
F(k,m|z) = z^{k-m}\; \text{ for }\; k \ge m \AND  F(k,m|z) = z^{k-1}F(1,m|z) 
\; \text{ for }\; k < m.
$$
Next, 
$$
F(1,m|z) = \sum_{j=1}^N p_jz F(k,m|z) 
= \sum_{j=1}^{m-1} p_jz^j F(1,m|z) + \sum_{j=m}^N p_j z^{1+j-m}\; \text{ for }\; m>1.
$$
We conclude that
$$
F(k,m|z) = \sum_{j=m}^N p_jz^{j+k-m} \bigg/ \biggl( 1- \sum_{j=1}^{m-1} p_jz^j \biggr) 
$$
Next, the general formula $G(m,m|z) = 1 \Big/ \Bigl( 1 - \sum_j p(m,j)z \,F(j,m|z) \Bigr)$
yields
$$
G(m,m|z)= \biggl( 1- \sum_{j=1}^{m-1} p_jz^j \biggr) \bigg/ 
\biggl( 1- \sum_{j=1}^{N} p_jz^j \biggr).
$$
Altogether,
\begin{equation}\label{eq:GX^o'}
G(k,m|z) = \begin{cases}
  \dps \biggl( z^{k-m}- \sum_{j=1}^{m-1} p_jz^{j+k-m} \biggr) \bigg/ 
  \biggl( 1- \sum_{j=1}^{N} p_jz^j \biggr) \;&\text{for }\; k \ge m, \AND\\[5pt]
  \hspace*{1.3cm}\dps \biggl( \,\sum_{j=m}^{N} p_jz^{j+k-m} \biggr) \bigg/ 
  \biggl(1- \sum_{j=1}^{N} p_jz^j \biggr) \;&\text{for }\; k < m.
  \end{cases}
\end{equation}

\noindent\underline{Dirichlet problem.} Let $f \in L(X^o) = L(\{1,\dots, N-2\})$ and 
$g \in L(\partial X) = L(\{N-1, N\})$. The Green kernel $G_{X^o}$ is obtained from
\eqref{eq:GX^o'} by replacing $N$ with $N-2$ and setting $z=1$. 
Then
\begin{equation}\label{eq:G^o-nonr}
G_{X^o}(k,m) = 
\begin{cases} \dfrac{\pi(m)}{\pi(N-1)} \;  &\text{for }\; 1 \le k \le m \le N-2\,,\AND\\[5pt] 
\dfrac{\bigl(\pi(m) - \pi(N-1)\bigr)}{\pi(N-1)} \;  &\text{for }\; 1 \le m < k \le N-2\,.
\end{cases}
\end{equation}
We next compute the hitting distributions $\nu_k$ on the boundary. It is clear that
For all $k \le N-2$ and $j \in \{N-1,N\}$,
$$
\nu_k(j) = \nu_1(j) = G_{X^o}(1,1)\,p_j = \frac{\pi(1)}{\pi(N-1)}p_j = \frac{p_j}{p_{N-1} + p_N} \,.
$$
Thus, given $g \in L(\{N-1,N\})$, the associated harmonic function on $X^o$ 
is
$$
h(k) \int_{\partial X}  g\,d\nu_k = h(1) = \frac{p_{N-1}g(N-1) + p_Ng(N)}{p_{N-1} + p_N}\,, 
\quad k \le N-2,
$$
while of course $h(j) = g(j)$ for $j \in \{ N-1,N\}$.
If now in addition $f \in L(\{1,\dots, N-2\})$, then the solution of the Dirichlet problem
is
$$
u_{\text{Dir}}(k) = \frac{p_{N-1}g(N-1) + p_Ng(N)}{p_{N-1} + p_N}
- \frac{1}{\pi(N-1)}\sum_{m=1}^{N-2}\pi(m) f(m) + \sum_{m=1}^{k-1} f(m)\,, \quad k \le N-2.
$$

\noindent\underline{Mixed problem.} A natural mixed problem is when in addition to 
$f \in L(X^o)$ we require that the solution satisfies 
$$
u(N-1) = g(N-1) \AND \partial_{\vec n}u(N) = g(N)\,, \quad\text{where}\quad  
g(N-1), g(N) \in \C\,. 
$$
Then we must have $u(N) = g(N) + g(N-1)$, and this time we do not re-conduct the problem
to the Poisson equation. Instead, we are lead to the above Dirichlet problem with boundary 
function $\tilde g(N-1) = g(N-1)$ and $\tilde g(N) = g(N-1)+g(N)$.

\smallskip

\noindent\underline{Poisson equation with potential.} Again, we only consider the
easiest case,  when the potential $v$ is constant, with $\lambda = 1+ v$,
so that  $|\lambda|> 1$. Again, $\wt P = \frac{1}{\lambda} P$ on $X$, and the 
associated Green kernel $\wt G$ is the one of \eqref{eq:GX^o'}, with $z = 1/\lambda$.
This leads to the solution.

\smallskip

We skip the Dirichlet problem with potential and the Robin problem.

\smallskip

\noindent
\underline{The bi-Laplacian.} Again,  the solutions of the iterated Poisson 
problem and the bi-Laplace Neumann problem can be written down immediately
via the Green kernel $G_{X \setminus \{ 1 \}}$ computed in \eqref{eq:GX-1}.
We also have $G_{X^o}$ and the hitting distributions on the boundary.

It is easy to compute the transition matrix of the boundary process:
$q(N,N-1)=1$ and $q(N-1,j) = \nu_1(j)$, $j \in \{ N-1,N\}$.
In this example, the exit boundary is $\{ N-1, N\}$, but the entrance boundary is 
only $\{ N-1\}$, that is,
$\hat \nu_k(N-1)=1$ and $\hat \nu_k(N)=0$ for all $k \in \{1, \dots, N-2\}$.

With $\pi$ given by \eqref{eq:pi-nonr}, we have for $i, j \in \{N-1,N\}$
$$
r(i,j) = \frac{1}{\pi(i)} \sum_{k=1}^{N-2} \hat\nu_k(i)\nu_k(j)\pi(k)
= \frac{1}{\pi(i)}\sum_{k=1}^{N-2} \delta_{N-1}(i)\nu_1(j)\pi(k).
$$
Thus, 
$$
\begin{gathered}
Q = \begin{pmatrix} \frac{\pi(1)}{\pi(N-1)}p_{N-1} & \frac{\pi(1)}{\pi(N-1)}p_N \\ 1 & 0 
                        \end{pmatrix}
\AND
R = \begin{pmatrix} C\,p_{N-1} & C\,p_N \\ 0 & 0 
                        \end{pmatrix}\,,\quad \text{where}\\ 
    C = \frac{\pi(1)\bigl(1-\pi(N-1)-\pi(N)\bigr)}{\pi(N-1)^2}.
\end{gathered}    
$$
Now the transfer
matrix for the Bi-Laplace Dirichlet to Neumann map is
$$
T = \begin{pmatrix} D & -D \\ -1 & 1 \end{pmatrix}, \quad \text{where }\;
D= \frac{\pi(N)^2 - \pi(N) + 2\pi(N)\pi(N-1)}{\pi(N)^2 - \pi(N) + \pi(N-1)}.
$$
We remark that $D < 0$.
If the Dirichlet boundary values are $g(N-1)$ and $g(N)$,
then the solution must have the Neumann boundary values 
$g_1(N-1) = D \bigl(g(N-1)-g(N)\bigr)$ and $g_1(N) = g(N)-g(N-1)$.
We get for $k \in \{ 1, \dots, N-2\}$
$$
\begin{aligned}
h_1(k) &= \int_{\partial X} g_1\,d\nu_k = h_1(1) = 
\bigl(g(N)-g(N-1)\bigr)\frac{p_N-D p_{N-1}}{p_{N-1}+p_N}
\AND \\  h_2(k) &= \int_{\partial X} g\,d\nu_k = h_2(1) 
= \frac{g(N-1)p_{N-1} + g(N)p_N}{p_{N-1}+p_N}.
\end{aligned}
$$
The unique solution $u$ of the bi-Laplace Dirichlet problem
$$
\Delta u = 0 \; \text{ on }\; \{1,\dots, N-2\}\,,\quad u(N-1) = g(N-1)\,,\; u(N) = g(N)
$$
is now given via \eqref{eq:G^o-nonr} as
$$
u(k) = \left(\frac{1-\pi(N-1)-\pi(N)}{\pi(N-1)} - (k-1)\right) h_1(1) + h_2(1).
$$
With this computation, we end the second example.


\end{document}